\newtheorem{thm}{Theorem}[section]
\newtheorem{lem}[thm]{Lemma}
\newtheorem{cor}[thm]{Corollary}
\newtheorem{defn}[thm]{Definition}
\newenvironment{customthm}[1]
  {\innercustomthm}
  {\endinnercustomthm}
\newcommand{\F}{\mathcal{F}}
\newcommand{\G}{G_{\F}}
\def\sm{{\backslash}}
\title{Covering hypergraphs are eulerian}
\author{Mateja \v{S}ajna\footnote{Corresponding author. Email: msajna@uottawa.ca. Mailing address: Department of Mathematics and Statistics, University of Ottawa, 150 Louis-Pasteur Private, Ottawa, ON, K1N 6N5, Canada.} \; and Andrew Wagner \\ {\small University of Ottawa}}
\begin{document}

\maketitle
\begin{abstract} An {\em Euler tour} in a hypergraph (also called a {\em rank-2 universal cycle} or {\em 1-overlap cycle} in the context of designs) is a closed walk that traverses every edge exactly once.  In this paper, we define a {\em covering k-hypergraph} to be a non-empty $k$-uniform hypergraph in which every $(k-1)$-subset of vertices appear together in at least one edge.  We then show that every covering $k$-hypergraph, for $k\geq 3$, admits an Euler tour if and only if it has at least two edges.

\medskip
\noindent {\em Keywords:} Covering hypergraph; Euler tour; rank-2 universal cycle; 1-overlap cycle; Euler family; interchanging cycle.
\end{abstract}


\baselineskip 17.8pt

\section{Introduction}

An Euler tour of a graph or hypergraph is a closed walk that traverses every edge exactly once.  The complete characterization of graphs that admit an Euler tour, which was conjectured by Euler in 1741 \cite{E}, and proved by Hierholzer and Wiener in 1873 \cite{HW}, is a fundamental and accessible result in graph theory.  However, analogous results for hypergraphs have only recently begun to be explored.

To our knowledge, Lonc and Naroski were the first to extend the notion of an Euler tour to hypergraphs.  In a 2010 paper  \cite{LN} they showed that the problem of existence of an Euler tour is NP-complete on the set of $k$-uniform hypergraphs, for any $k\geq 3$, as well as when restricted to a particular subclass of 3-uniform hypergraphs.

A {\em Steiner triple} ({\em quadruple}) {\em system} is a 3-uniform (4-uniform, respectively) hypergraph in which every pair (triple, respectively) of vertices lie together in exactly one edge. Initial results for Euler tours in Steiner triple and quadruple systems used different terminology. Namely, in 2012, Dewar and Stevens \cite{DS} proved that every cyclic Steiner triple system of order greater than three,  and every cyclic twofold triple system admits a {\em rank-2 universal cycle} (that is, Euler tour), and in 2013, Horan and Hurlbert showed that for every admissible order greater than four, there exist a Steiner triple system \cite{HH2} and a Steiner quadruple system \cite{HH1} with a {\em 1-overlap cycle} (that is, Euler tour).

We define a {\em covering $k$-hypergraph}, for $k\geq 3$, to be a non-empty $k$-uniform hypergraph in which every $(k-1)$-subset of vertices appear together in at least one edge.

In a 2015 paper \cite{BS2}, Bahmanian and \v{S}ajna  embarked on a systematic study of eulerian properties of general hypergraphs; some of their techniques and results will be used in this paper. In particular, they introduced the notion of an {\em Euler family} --- a collection of closed trails that jointly traverse each edge exactly once --- and showed that the problem of existence of an Euler family is polynomial on the class of all hypergraphs. In addition, they proved that every covering 3-hypergraph with at least two edges admits an Euler family.

Most recently, the present authors gave a short proof \cite{SW} to show that every triple system --- that is, a 3-uniform hypergraph in which every pair of vertices lie together in the same number of edges --- admits an Euler tour as long as it has at least two edges.

The main result of this paper is as follows. \\

\begin{thm}\label{thm:main} Let  $H$ be a covering $k$-hypergraph, for $k\geq 3$.  Then $H$ admits an Euler tour if and only if it has at least two edges.
\end{thm}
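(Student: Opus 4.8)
\emph{Necessity} is immediate: a covering $k$-hypergraph is non-empty, and a hypergraph with exactly one edge $e$ has no Euler tour, since the two ends of any traversal of $e$ are distinct vertices, so $v,e,v$ is not a closed walk. For \emph{sufficiency} I would work throughout with the following reformulation. An Euler tour of $H$ is the same thing as a choice, for each edge $e\in E(H)$, of a pair of \emph{anchors} $\{u_e,v_e\}\subseteq e$ with $u_e\neq v_e$, such that the multigraph $G$ on $V(H)$ with edge multiset $\{u_e v_e:e\in E(H)\}$ has all degrees even and is connected apart from isolated vertices; an Euler family is the same with only the parity requirement. (Indeed, an Euler tour of $H$ projects onto an Euler tour of $G$, and conversely an Euler tour of $G$ lifts back to one of $H$.)

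I would first dispatch some routine points about a covering $k$-hypergraph $H$ with at least two edges. It has no isolated vertex, and it is connected: given any two vertices there is a $(k-1)$-subset through both, and it lies in an edge (the first use of $k\geq 3$). Moreover, if $|E(H)|=2$ then the two edges coincide as a set (again by a short argument using $k\geq 3$), so $H$ is a pair of parallel edges and we are done; and if $|V(H)|=k$ then every edge equals $V(H)$, and an explicit anchoring --- yielding a triangle together with parallel edges --- exhibits an Euler tour. So it remains to treat $H$ with $|E(H)|\geq 3$ and $|V(H)|>k$. For such $H$ I would prove the \emph{Euler family lemma}: $H$ admits an Euler family, i.e., there is an anchoring making all degrees of $G$ even. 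For $k=3$ this is the theorem of Bahmanian and \v{S}ajna, and I would extend their argument to all $k\geq 3$.

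The heart of the proof is to pass from an Euler family to an Euler tour. Among all anchorings of $H$ for which every degree of $G$ is even, I would pick one minimizing the number of connected components of $G$ (ignoring isolated vertices), and suppose toward a contradiction that $G$ has two components $A$ and $B$. Pick $a\in A$, $b\in B$ and a $(k-1)$-subset through both; it lies in an edge $e\supseteq\{a,b\}$. Since $k\geq 3$, the edge $e$ contains a vertex that is not one of its own anchors, and since the anchors of $e$ lie in a single component of $G$, one can find an \emph{interchanging cycle}: a short closed alternating configuration along which $e$ and one or two further edges are re-anchored at once, producing an anchoring that still makes all degrees even but whose multigraph has strictly fewer components. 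This contradicts minimality, so $G$ is connected and $H$ has an Euler tour.

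I expect this last step to be the main obstacle. The point is to show that the covering property always furnishes an interchanging cycle that simultaneously preserves every degree parity and genuinely fuses two components of $G$; the bookkeeping becomes delicate exactly when the anchor vertices of the edges involved, the chosen non-anchor vertex of $e$, and the endpoints of the linking edges coincide in various ways, and disposing of these degenerate configurations --- while certifying that the component count strictly decreases --- is where the real effort goes.
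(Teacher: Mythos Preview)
Your reformulation via anchor multigraphs, the routine reductions, and the overall strategy of minimizing components and seeking an interchanging cycle are all sound and match the spirit of the paper's treatment of the $k=3$ case. However, you miss the paper's central simplification for general $k$: rather than run the interchanging-cycle argument for every $k$, the paper reduces $k+1$ to $k$ by a one-line induction. Fix a vertex $v$; for each edge containing $v$ delete $v$, and for each edge not containing $v$ delete an arbitrary vertex. The result is a covering $k$-hypergraph on $V\setminus\{v\}$ with the same number of edges, and an Euler tour of it lifts verbatim to $H$ since every edge only grew. This pushes all the work into the base case $k=3$, and that base case is exactly Theorem~\ref{thm:covering}.

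The genuine gap in your proposal is the interchanging-cycle step itself, which you correctly flag as the obstacle but substantially underestimate. Your sketch suggests that re-anchoring ``one or two further edges'' around a single edge $e\supseteq\{a,b\}$ will always merge two components; the paper's $k=3$ proof shows this is far from the case. There one needs not only a minimum Euler family but an auxiliary minimization (the $\G$-degree of a fixed vertex $v_0$), a structural lemma (Lemma~\ref{lem:order7setup}) pinning down five specific vertices, and then a multi-page case analysis (Theorem~\ref{thm:order7} for $n\ge 7$, Lemma~\ref{lem:coveringorder3-5} for $n\le 6$) in which the decisive interchanging cycles have length up to ten and are sometimes applied twice in succession to different Euler families. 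There is no evident short alternating configuration that works uniformly; the degenerate coincidences you allude to are precisely what forces this case split. So either you must reproduce something comparable to Sections~\ref{sec:order7}--\ref{sec:order6} for general $k$, or---much more economically---adopt the paper's reduction to $k=3$ and then carry out that analysis once.
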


Thus, Theorem~\ref{thm:main} represents a significant improvement on \cite{SW}, as well as on the above-mentioned result of \cite{BS2}.

The framework for the proof of Theorem~\ref{thm:main} is presented in Section~\ref{sec:mainresult}, while the technical details are given in Section~\ref{sec:mainlemmas}. Here, we also introduce our main, new technique of interchanging cycles, which resembles the technique of alternating paths in matching theory.

\section{Preliminaries}\label{sec:preliminaries}

We use hypergraph terminology established in \cite{BS1, BS2}, which applies to loopless graphs as well.  Any graph theory terms not explained here can be found in \cite{BM}.

A {\em hypergraph} $H$ is a pair $(V,E)$, where $V$ is a non-empty set, and $E$ is a multiset of elements from $2^V$.  The elements of $V=V(H)$ and $E=E(H)$ are called the {\em vertices} and {\em edges} of $H$, respectively. The {\em order} of $H$ is $|V|$.  A hypergraph of order 1 is called {\em trivial}, and a hypergraph with no edges is called {\em empty}.

Distinct vertices $u$ and $v$ in a hypergraph $H=(V,E)$ are called {\em adjacent} (or {\em neighbours})  if they lie in the same edge, while a vertex $v$ and an edge $e$ are said to be {\em incident} if $v\in e$.    The {\em degree} of $v$ in $H$, denoted $\deg_H(v)$, is the number of edges of $H$ incident with $v$.  A hypergraph $H$ is called {\em $k$-uniform} if every edge of $H$ has cardinality $k$.

A {\em covering $k$-hypergraph}, for $k\geq 3$, is a non-empty $k$-uniform hypergraph with the property that every $(k-1)$-subset of vertices lie together in at least one edge.

The {\em incidence graph} of a hypergraph $H=(V,E)$ is a bipartite simple graph $\mathcal{G}(H)$ with vertex set $V \cup E$ and bipartition $\{V,E\}$ such that
vertices $v\in V$ and $e\in E$ of $\mathcal{G}(H)$ are adjacent if and only if $v$ is incident with $e$ in $H$.  The elements of $V$ and $E$ are called {\em v-vertices} and {\em e-vertices} of $\mathcal{G}(H)$, respectively.

A hypergraph $H' = (V',E')$ is called a {\em subhypergraph} of the hypergraph $H = (V,E)$ if $V'\subseteq V$ and $E'=\{ e \cap V': e \in E''\}$ for some submultiset $E''$ of $E$. For any subset $V'\subseteq V$, we define the {\em subhypergraph of $H$ induced by} $V'$, denoted $H[V']$,  to be the hypergraph $(V',E')$ with $E' = \{e\cap V': e\in E, e\cap V'\ne\emptyset \}$. Thus, we obtain the subhypergraph induced by $V'$  by deleting all vertices in $V \sm V'$ from $V$ and from each edge of $H$, and subsequently deleting all empty edges.
By $H - V'$ we denote  the subhypergraph of $H$ induced by $V \sm V'$, and for $v \in V$, we write shortly $H - v$ instead of $H - \{ v\}$.
For any subset $E'\subseteq E$, we denote the subhypergraph $(V, E \sm E')$ of $H$ by $H \sm E'$, and for $e \in E$, we write $H  \sm  e$ instead of $H \sm  \{ e\}$. For any multiset $E'$ of $2^V$, the symbol $H+E'$ will denote the hypergraph obtained from $H$ by adjoining all edges in $E'$.

A {\em $v_0v_k$-walk} in $H$ is a sequence  $W=v_0e_1v_1e_2 \ldots e_kv_k$ such that $v_0,\ldots , v_k\in V$; $e_1, \ldots , e_k\in E$; and  $v_{i-1},v_i\in e_i$ with $v_{i-1}\neq v_i$ for all $i=1, \ldots, k$.  A walk is said to {\em traverse} each of the vertices and edges in the sequence.  The vertices $v_0,v_1, \ldots, v_k$ are called the {\em anchors} of $W$.  If $e_1, e_2, \ldots, e_k$ are pairwise distinct, then $W$ is called a {\em trail} ({\em strict trail} in \cite{BS1,BS2}); if $v_0=v_k$ and $k \ge 2$, then $W$ is {\em closed}.  If $W$ is a closed trail and $v_0,v_1,\ldots , v_{k-1}$ are pairwise distinct, then $W$ is called a {\em cycle}; more specifically, a cycle {\em of length $k$}, or {\em $k$-cycle}.

A hypergraph $H$ is {\em connected} if, for any pair $u,v\in V(H)$, there exists a $uv$-walk in $H$.  A {\em connected component} is a maximal connected subhypergraph of $H$ without empty edges.  We call $v \in V(H)$ a {\em cut vertex} of $H$, and  $e\in E(H)$ a {\em cut edge} of $H$, if $H-v$ and $H\sm  e$, respectively, have more connected components than $H$.

An {\em Euler family} of a hypergraph $H$ is a collection of pairwise anchor-disjoint and edge-disjoint closed trails that jointly traverse every edge of $H$, and an {\em Euler tour}  is a closed trail that traverses every edge of $H$. Thus, an Euler family of cardinality 1 corresponds to an Euler tour. The  closed trails in an Euler family $\F$ are called the {\em components} of $\F$, and an Euler family of $H$ with the smallest number of components is called {\em minimum.}
A hypergraph that is either empty or admits an Euler tour (family) is called {\em eulerian (quasi-eulerian)}.

\section{Proof of the main result}\label{sec:mainresult}

All of our work in Section~\ref{sec:mainlemmas} will serve to prove the following result. \\

\begin{thm}\label{thm:covering} Let $H$ be a covering 3-hypergraph  with at least two edges.  Then $H$ is eulerian.
\end{thm}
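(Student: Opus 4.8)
The plan is to derive Theorem~\ref{thm:covering} from the weaker result of Bahmanian and \v{S}ajna \cite{BS2} that every covering 3-hypergraph with at least two edges admits an Euler \emph{family}. So let $\F$ be a minimum Euler family of $H$ and suppose, for contradiction, that $\F$ has $m\geq 2$ components; it then suffices to construct an Euler family of $H$ with fewer components. A small number of exceptional configurations (covering 3-hypergraphs with very few vertices) may have to be dispatched separately by exhibiting an Euler tour directly, so we may assume $H$ is large enough to leave room to manoeuvre. Since any minimum Euler family of a covering 3-hypergraph with at least two edges would then have a single component, this yields the Euler tour.

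To make the surgery precise I would pass to the incidence graph $\mathcal{G}(H)$, using the reformulation from \cite{BS2}: an Euler family of $H$ corresponds to a spanning subgraph $F$ of $\mathcal{G}(H)$ in which every e-vertex has degree $2$ and every v-vertex has even degree, and contracting for each edge $e$ the path through $e$ and its two $F$-neighbours to a single graph edge produces an even multigraph $\G$ on $V(H)$ whose components containing an edge are exactly the (anchor sets of the) components of $\F$. In this language, reducing the number of components of $\F$ means reassigning the $F$-neighbourhoods of some e-vertices --- equivalently, changing, for some edges $e=\{x,y,z\}$, which of the three pairs in $\{x,y,z\}$ is the one "used" by $e$ --- so as to keep every v-vertex of even degree while decreasing the number of edge-containing components of $\G$. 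The tool for doing this is an \emph{interchanging cycle}: a closed alternating sequence $v_0\,e_1\,v_1\,e_2\,\cdots\,e_t\,v_t$ with $v_t=v_0$ and $v_{i-1},v_i\in e_i$, in which odd- and even-indexed edges play complementary roles (one kind currently uses $v_{i-1}$ but not $v_i$, the other the reverse), so that simultaneously shifting the used pair of each $e_i$ leaves every v-vertex degree parity unchanged while rerouting the corresponding edges of $\G$; this is the analogue of an augmenting path in matching theory.

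The covering property enters in building such a cycle. Pick a vertex $a$ in one edge-containing component $C_1$ of $\G$ and a vertex $b$ in another such component $C_2$. The pair $\{a,b\}$ lies in an edge $f=\{a,b,c\}$; since $C_1$ and $C_2$ are distinct, $f$ is not currently used via $\{a,b\}$, so switching $f$ to be used via $\{a,b\}$ is a genuine change. This change creates exactly two "parity defects" (one vertex whose degree dropped by $1$, one whose degree rose by $1$); using the covering property one finds at each defective vertex a further edge whose used pair can be switched to repair that defect at the cost of moving the defect elsewhere, and iterating this propagation until the walk closes up yields an interchanging cycle. If it is steered through both $C_1$ and $C_2$, performing the interchange merges these two components without creating new ones, contradicting minimality of $\F$; hence $m=1$ and $H$ has an Euler tour.

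The main obstacle is precisely the existence of a suitable interchanging cycle: much as with augmenting paths, one must show the alternating walk can always be closed \emph{and} that the resulting interchange genuinely lowers the component count rather than merely permuting the same components. This forces a fairly intricate case analysis --- on coincidences among the vertices involved, on edge multiplicities, and on how the bridging edges meet the trails of $\F$ --- which is the technical heart of Section~\ref{sec:mainlemmas}. The remaining ingredients (the reduction to Euler families, the incidence-graph reformulation, and the bookkeeping verifying that all v-vertex parities are preserved) are routine.
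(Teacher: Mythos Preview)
Your outline matches the paper's approach: start from the Euler family guaranteed by \cite{BS2}, pass to the incidence graph, and reduce the component count via $\F$-interchanging cycles, with small orders handled separately. Two points of imprecision are worth flagging. First, your description of an interchanging cycle as one in which ``odd- and even-indexed edges play complementary roles'' is stricter than what is needed and than what the paper uses: the correct condition is simply that each e-vertex on the cycle is incident with exactly one $\G$-edge of the cycle (Definition~4.2), with no global alternation required. Second, your ``propagate the defect until the walk closes up'' heuristic is not the mechanism the paper actually relies on, and on its own it does not obviously terminate in a \emph{diminishing} cycle; the paper instead imposes extra structure by choosing $\F$ to minimise $\deg_{\G}(v_0)$ for a fixed vertex $v_0$, which turns many candidate interchanging cycles into immediate contradictions (Lemma~\ref{lem:order7setup}) and drives the case analysis of Theorem~\ref{thm:order7}. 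Since you explicitly defer the hard step to the case analysis of Section~\ref{sec:mainlemmas}, your plan is consistent with the paper, but be aware that the existence of a diminishing cycle is exactly where all the work lies and is not delivered by a generic propagation argument.
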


\begin{proof} Let $n$ be the order of $H$.
If $n\geq 7$, then $H$ is eulerian by Theorem~\ref{thm:order7}, and if $3\leq n\leq 6$, then $H$ is eulerian by Lemma~\ref{lem:coveringorder3-5}.
\end{proof}

Theorem~\ref{thm:covering} will now be used as the basis of induction in the proof of our main result, which we restate below.\\

\begin{customthm}{\ref{thm:main} }
Let $H$ be a covering $k$-hypergraph, for $k\geq 3$.  Then $H$ is eulerian if and only if it has at least two edges.
\end{customthm}

\begin{proof}
By definition, we know $H$ is non-empty. The necessity is then clear.

To prove sufficiency, we use induction on $k$.  For $k=3$, Theorem~\ref{thm:covering} implies that $H$ is eulerian.  Suppose that, for some fixed $k\geq 3$, the statement holds: that is, any covering $k$-hypergraph with at least two edges is eulerian.

Let $H=(V,E)$ be a covering $(k+1)$-hypergraph with $|E|\geq 2.$  Fix any $v\in V$, and let $\mathcal{B}=\{e\sm \{v\}:e\in E, v\in e\}$ and $E'=\{e\in E: v\not\in e\}$.  Then let $\mathcal{C}$ be a set  obtained from $E'$ by removing an arbitrary vertex from each $e\in E'$.  Let $H^*=(V\sm \{v\},\mathcal{B}\cup\mathcal{C}),$ where $\mathcal{B}\cup\mathcal{C}$ denotes the multiset union of $\mathcal{B}$ and $\mathcal{C}$.  Then $H^*$ is a covering $k$-hypergraph.  Since $|E(H^*)|=|E(H)|\geq 2$, by the induction hypothesis, we have that $H^*$ admits an Euler tour $T$. Replacing each edge in $T$ by the corresponding edge of $H$, we obtain an Euler tour of $H$.

The result follows by induction.
\end{proof}

\section{Covering 3-hypergraphs}\label{sec:mainlemmas}

\subsection{The big tools}\label{sec:big-tools}

The following theorem will allow us to translate our analysis from a hypergraph to its incidence graph.\\

\begin{thm}\label{thm:incidencegraph}{\rm \cite[Theorem 2.18]{BS2}} Let $H$ be a hypergraph and $G$ its incidence graph.  Then the following hold.
\begin{description}
\item[(1)] $H$ is quasi-eulerian if and only if $G$ has a spanning subgraph $G'$ such that $\text{deg}_{G'}(e) = 2$ for all $e\in E(H)$, and $\text{deg}_{G'}(v)$ is even for all $v\in V(H)$.
\item[(2)] $H$ is eulerian if and only if $G$ has a spanning subgraph $G'$ with at most one non-trivial connected component
    such that $\text{deg}_{G'}(e) = 2$ for all $e\in E(H)$, and $\text{deg}_{G'}(v)$ is even for all $v\in V(H)$.\qed
\end{description}
\end{thm}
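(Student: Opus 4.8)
The plan is to establish both biconditionals by passing back and forth between closed trails of $H$ and closed walks of the bipartite incidence graph $G$, exploiting the fact that every walk in $G$ alternates between v-vertices and e-vertices.

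For the forward implication of (1), I would start from an Euler family $\F$ of $H$. Each component is a closed trail $W=v_0e_1v_1\cdots e_kv_0$, and I would let $G'$ be the (automatically spanning) subgraph of $G$ whose edges are precisely the incidences $\{v_{i-1},e_i\}$ and $\{v_i,e_i\}$ over all $i$ and over all components of $\F$. Because $\F$ traverses each edge of $H$ exactly once --- edge-disjointness between components together with distinctness of edges within a trail --- every e-vertex $e$ receives exactly the two incidences to its predecessor and successor in its unique traversal, so $\deg_{G'}(e)=2$. For a v-vertex $v$, each visit of a trail to $v$ contributes exactly the two incidences to the edge immediately before and immediately after $v$; the bookkeeping point is that these incidences are pairwise distinct across all visits --- within a trail this uses edge-distinctness together with $v_{i-1}\ne v_i$ (so no two visits to $v$ are cyclically adjacent), and across trails it uses the anchor-disjointness of $\F$ --- whence $\deg_{G'}(v)$ is twice the number of visits, hence even.

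For the converse of (1), every connected component of $G'$ has all degrees even (e-vertices have degree $2$, v-vertices even degree), hence admits an Euler circuit; being a subgraph of the bipartite graph $G$, such a circuit, suitably rotated, reads $v_0e_1v_1\cdots e_mv_0$. I would then verify that this is a genuine closed trail of $H$: consecutive anchors are distinct (otherwise an incidence edge of $G'$ would be used twice in the Euler circuit), the $e_j$ are distinct (each has degree $2$ in its component, so is traversed once), and non-triviality of the component forces $m\ge 2$. The resulting closed trails are pairwise edge- and anchor-disjoint because the components of $G'$ partition its e-vertices and its v-vertices, and jointly they traverse every edge of $H$ since every e-vertex has positive degree in $G'$; so they form an Euler family.

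Statement (2) follows by tracking connectivity through exactly these two constructions: the edges of $G'$ arising from a single closed trail induce a connected subgraph of $G$, so an Euler tour yields a $G'$ with at most one non-trivial component; conversely such a $G'$ produces an Euler family with at most one component, i.e.\ an Euler tour when $H$ is non-empty (and $H$ is eulerian by definition when empty). I expect the main obstacle to be the degree and disjointness bookkeeping in the forward direction of (1): one must argue carefully that distinct visits of the trails of $\F$ to a fixed vertex never reuse an incidence edge of $G'$ --- which is exactly where anchor-disjointness of an Euler family is used --- and, in both directions, that the distinction between a closed trail and a mere closed walk is preserved.
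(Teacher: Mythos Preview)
Your proposal is correct. Note, however, that the paper does not actually prove this theorem: it is quoted from \cite{BS2} and marked with \qed, so there is no in-paper proof to compare against. That said, the paragraph immediately following the statement summarizes the intended correspondence --- ``each edge $ve$ of $G'$ corresponds to a sequence $ve$ or $ev$ in a component of the Euler family $\F$,'' with components of $\F$ in bijection with the non-trivial connected components of $G'$ --- and your argument realizes exactly this correspondence in both directions, so your approach coincides with the one the paper alludes to.
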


\phantom{aaa}

The proof of Theorem~\ref{thm:incidencegraph} reveals that each edge $ve$ of $G'$ corresponds to a sequence $ve$ or $ev$ in a component of the Euler family $\F$. Consequently, the components of $\F$ are in bijective correspondence with the non-trivial connected components of $G'$. We will henceforth call $G'$ the {\em subgraph of $G$ corresponding to $\F$}, and denote it by $\G$. An edge $e\in E(G)$ will be called a  {\em $\G$-edge} if $e\in E(\G)$, and a {\em non-$\G$-edge} otherwise.

We are now prepared to introduce the main tool that will be used throughout this section.
Note that we define the {\em symmetric difference} of simple graphs  $G_1=(V_1,E_1)$ and $G_2=(V_2,E_2)$, denoted $G_1\Delta G_2$, as the graph $(V_1\cup V_2, E_1\Delta E_2)$, where $E_1\Delta E_2$ is the symmetric difference of $E_1$ and $E_2$. \\

\begin{figure}[t]
\centerline{\includegraphics[scale=0.8]{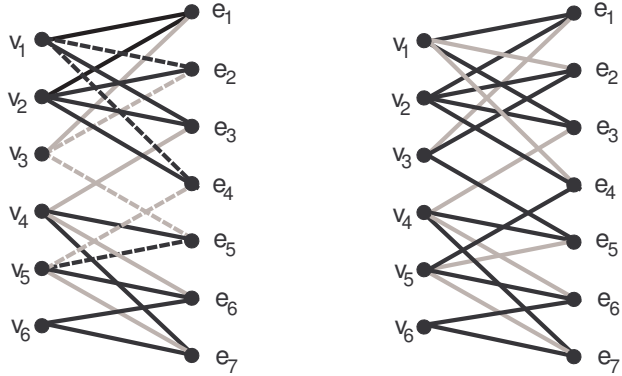}}
\caption{The incidence graph of a 3-uniform hypegraph with subgraphs $\G$ (left) and $G_{\F'}=\G\Delta C$ (right). Black (grey) edges are those in (not in) the subgraph corresponding to the Euler family, and dashed edges represent the $\F$-interchanging (in fact, $\F$-diminishing) cycle $C$. }\label{fig:F-IntCycle}
\end{figure}

\begin{defn}{\rm Let $H$ be a hypergraph with an Euler family $\F$ and incidence graph $G$, and let $\G$ be the subgraph of $G$ corresponding to $\F$.

A cycle $C$ of $G$ is called {\em $\F$-interchanging} if $\deg_{\G\cap C}(e)=1$ for all e-vertices $e\in E(H)\cap V(C)$. If $C$ is an $\F$-interchanging cycle of $G$ and $\G\Delta C$ has fewer non-trivial connected components than $\G$, then $C$ is an {\em $\F$-diminishing cycle} of $G$.
}
\end{defn}

See Figure~\ref{fig:F-IntCycle} for an example of an $\F$-interchanging (in fact, $\F$-diminishing) cycle. Note that in all figures, the incidence graph of a hypergraph will be drawn with v-vertices on the left.\\

\begin{lem}\label{lem:interchange} Let $H$ be a hypergraph with an Euler family $\F$ and incidence graph $G$, and let $\G$ be the subgraph of $G$ corresponding to $\F$.

If $C$ is an $\F$-interchanging cycle of $G$, then $\G\Delta C$ is a subgraph of $G$ corresponding to an Euler family of $H$.
\end{lem}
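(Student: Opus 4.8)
The plan is to verify directly that $G_{\F}\Delta C$ satisfies the degree and parity conditions of Theorem~\ref{thm:incidencegraph}(1), hence corresponds to an Euler family of $H$. Write $G'=G_{\F}\Delta C$. The key observation is that taking the symmetric difference with the edge set of a cycle $C$ changes the degree of a vertex $x$ in $V(G)$ by an even amount: specifically, $\deg_{G'}(x)=\deg_{G_{\F}}(x)-\deg_{G_{\F}\cap C}(x)+\deg_{C}(x)-\deg_{G_{\F}\cap C}(x) = \deg_{G_{\F}}(x)+\deg_C(x)-2\deg_{G_{\F}\cap C}(x)$. Since $C$ is a cycle, $\deg_C(x)\in\{0,2\}$ for every $x$, so $\deg_{G'}(x)\equiv \deg_{G_{\F}}(x)\pmod 2$ for all $x$. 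In particular, every v-vertex retains even degree, which handles the parity condition automatically.

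The main point is then the e-vertex condition: I must show $\deg_{G'}(e)=2$ for every $e\in E(H)$. First I would dispose of the e-vertices $e\notin V(C)$: for these, $\deg_C(e)=\deg_{G_{\F}\cap C}(e)=0$, so $\deg_{G'}(e)=\deg_{G_{\F}}(e)=2$. Now suppose $e\in V(C)$. Then $\deg_C(e)=2$, and the defining property of an $\F$-interchanging cycle gives $\deg_{G_{\F}\cap C}(e)=1$. Plugging into the formula above, $\deg_{G'}(e)=\deg_{G_{\F}}(e)+2-2\cdot 1=\deg_{G_{\F}}(e)=2$. So the e-vertex condition holds as well.

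Having established that $G'$ is a spanning subgraph of $G$ with $\deg_{G'}(e)=2$ for all e-vertices and $\deg_{G'}(v)$ even for all v-vertices, Theorem~\ref{thm:incidencegraph}(1) immediately yields that $H$ is quasi-eulerian with $G'$ being the subgraph of $G$ corresponding to some Euler family, which is precisely the claim. I expect the only mild subtlety to be bookkeeping with the degree-in-symmetric-difference identity, and being careful that $C$, being a cycle of the bipartite incidence graph, passes through each of its vertices exactly twice so that $\deg_C(x)\in\{0,2\}$ throughout; once that identity is in hand the argument is a two-line case check. No genuine obstacle is anticipated here — the content of the lemma is really just the observation that the $\F$-interchanging condition is exactly what is needed to preserve the degree-2 requirement at e-vertices under the symmetric difference operation.
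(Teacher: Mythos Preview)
Your argument is correct and matches the paper's proof essentially line for line: both use the identity $\deg_{\G\Delta C}(x) = \deg_{\G}(x) + \deg_C(x) - 2\deg_{\G\cap C}(x)$ and then invoke Theorem~\ref{thm:incidencegraph}(1). The paper's version is simply more terse, omitting the explicit case split between e-vertices on and off $C$.
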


\begin{proof}For all $x\in V(G)$, we have that $$\deg_{\G\Delta C}(x) = \deg_{\G}(x) + \deg_C(x) - 2\deg_{\G\cap C}(x),$$ so it is clear that in $\G\Delta C$, every v-vertex has even degree, and every e-vertex has degree 2.  Hence, by Theorem~\ref{thm:incidencegraph}, the subgraph $\G\Delta C$ corresponds to an Euler family of $H$.
\end{proof}

The general approach to proving Theorem~\ref{thm:covering} will be as follows: we start with an Euler family $\F$ of $H$, which is guaranteed to exist by Theorem~\ref{cor:quasieuleriancovering} below, and then use an $\F$-diminishing cycle to construct an Euler family with fewer components. \\

\begin{thm}\label{cor:quasieuleriancovering}{\rm \cite[Corollary 5.3]{BS2}} Let $H$ be a covering 3-hypergraph with at least two edges.  Then $H$ is quasi-eulerian.\qed
\end{thm}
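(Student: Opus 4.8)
The plan is to invoke Theorem~\ref{thm:incidencegraph}(1) and exhibit, in the incidence graph $G$ of $H$, a spanning subgraph $G'$ in which every e-vertex has degree exactly $2$ and every v-vertex has even degree. Specifying such a $G'$ amounts to choosing, for each edge $e\in E(H)$, a single vertex $f(e)\in e$ to be \emph{discarded}, with $G'$ retaining the other two incidences of $e$; then $\deg_{G'}(e)=2$ automatically, while $\deg_{G'}(v)=\deg_H(v)-|f^{-1}(v)|$, so the even-degree condition becomes
$$|f^{-1}(v)|\equiv \deg_H(v)\pmod 2\qquad\text{for all }v\in V(H).$$
Thus the whole problem reduces to producing a function $f\colon E(H)\to V(H)$ with $f(e)\in e$ satisfying this parity condition.

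First I would record the bookkeeping: for any such $f$ we have $\sum_{v}|f^{-1}(v)|=|E(H)|$, while $\sum_v\deg_H(v)=3|E(H)|$, so $|E(H)|\equiv\sum_v\deg_H(v)\equiv|O|\pmod 2$, where $O$ is the set of odd-degree vertices. Hence the parity constraints are globally consistent, and for every $f$ the \emph{defect set} $B(f):=\{v:|f^{-1}(v)|\not\equiv\deg_H(v)\pmod 2\}$ has even cardinality.

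The heart of the argument is an augmenting/exchange procedure — a simple precursor of the interchanging-cycle technique used later in the paper. Begin with an arbitrary $f$ (say $f(e)$ is the least vertex of $e$ in a fixed ordering). If $B(f)=\emptyset$ we are done; otherwise pick $u\in B(f)$. Since $u\in B(f)$, not every edge through $u$ can have $f$-value $u$ (else $|f^{-1}(u)|=\deg_H(u)$), so there is an edge $e\ni u$ with $f(e)=x\ne u$; reassigning $f(e):=u$ flips the parities at $u$ and $x$ only, removing $u$ from the defect set and toggling $x$. If $x$ was already a defect vertex, $|B|$ has dropped by $2$; otherwise the defect has merely been relocated from $u$ to $x$, and we iterate, rerouting $f$ along a trail of $H$ using each edge at most once. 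The covering hypothesis — every pair of vertices, in particular every pair of defect vertices, lies in a common edge, together with the abundance of edges that coveringness forces — is what allows such a trail to be completed so that it eventually \emph{identifies} two defect vertices rather than shuffling one defect around indefinitely. Concretely, I would take $f$ minimizing $|B(f)|$, and analyse the set of vertices reachable from a putative defect vertex by defect-preserving reassignments; a nonempty $B$ would then force $H$ to contain a ``single-edge-like'' substructure, contradicting that $H$ is covering with at least two edges.

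The one point genuinely requiring care is this termination claim: that the exchange cannot get trapped cycling a lone defect vertex around a small, poorly connected region. This is exactly where both hypotheses are used — for a single edge $\{a,b,c\}$, and already for two edges sharing only one vertex, no valid $f$ exists and the exchange does cycle — so the reachability/minimality analysis must draw on the density and connectivity that ``covering with $|E(H)|\ge 2$'' provides; everything else is routine once Theorem~\ref{thm:incidencegraph}(1) is in hand. (As indicated, the statement is quoted from \cite[Corollary 5.3]{BS2}, where it is derived within the general theory of Euler families; the sketch above is an alternative self-contained route.)
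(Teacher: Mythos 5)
Your reduction is sound: via Theorem~\ref{thm:incidencegraph}(1), producing an Euler family is exactly the problem of choosing a discarded vertex $f(e)\in e$ for each edge so that $|f^{-1}(v)|\equiv\deg_H(v)\pmod 2$ for every $v$, and your global bookkeeping (the defect set $B(f)$ always has even cardinality, and the hypotheses are sharp because a single edge, which is a covering $3$-hypergraph, admits no valid $f$) is correct. Be aware, though, that the paper does not prove this statement at all --- it is imported verbatim from \cite[Corollary 5.3]{BS2}, which is why it carries a terminal box --- so you are proposing a new, self-contained proof, and it has to stand entirely on its own.

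It does not yet, because the whole content of the theorem is concentrated in the step you defer. A single reassignment of $e$ from $x$ to $y$ flips the parities at $x$ and $y$ only, so to eliminate two defects $u,u'$ you need a family of reassignments on \emph{distinct} edges whose net flip is $\chi_u+\chi_{u'}$; equivalently, a directed path from $u$ to $u'$ in the digraph with arcs $f(e)\to y$ for $y\in e\setminus\{f(e)\}$. Minimality of $|B(f)|$ only tells you no such path exists, i.e., the set $R$ of vertices reachable from $u$ misses $u'$, and one must then derive a contradiction from how edges meet the boundary of $R$. (A parity count over $R$ does produce an edge meeting $R$ in exactly one vertex with $f(e)\notin R$, but using it merely relocates the defect outside $R$, and nothing you have written rules out this shuffling continuing indefinitely.) Your assertion that a nonempty $B(f)$ ``would force $H$ to contain a single-edge-like substructure'' is precisely the claim that requires proof; the covering hypothesis and $|E(H)|\geq 2$ are invoked only as a slogan rather than deployed in an actual argument. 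Until that termination/reachability analysis is carried out in full --- or the statement is simply cited from \cite{BS2}, as the paper does --- the proof is incomplete at its essential point.
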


\subsection{Technical lemmas}\label{sec:technicallemmas}

In this section, we state the technical lemmas needed to prove Theorem~\ref{thm:covering}. \\

\begin{lem}\label{prop:noncut} Let $G$ be a connected loopless graph.  If $G$ has a block of order $k \ge 2$, then it has at least $k$ vertices that are not cut vertices.
\end{lem}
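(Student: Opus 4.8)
The plan is to run the whole argument inside the \emph{block--cut tree} $T$ of $G$: the nodes of $T$ are the blocks and cut vertices of $G$, a block $B'$ being adjacent to a cut vertex $v$ exactly when $v\in V(B')$, and $T$ is a tree. Two standard facts will do most of the work: a vertex of $G$ is a cut vertex if and only if it lies in at least two blocks (so every non-cut vertex of $G$ lies in a \emph{unique} block), and every leaf of $T$ is a block --- an \emph{end-block} --- which therefore contains exactly one cut vertex of $G$. Since $G$ is connected and has a block of order $k\ge 2$, it has at least two vertices, so no block is a single vertex; in particular every block, and every end-block, has order at least $2$, so each end-block contributes at least one non-cut vertex of $G$.

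First I would dispose of the trivial case: if $B$ is the only block of $G$, then $G=B$ has no cut vertices and all $k$ of its vertices qualify. Otherwise, let $v_1,\dots,v_c$ be the cut vertices of $G$ lying in $B$, where $1\le c\le k$; these are exactly the neighbours of $B$ in $T$, so $T-B$ has exactly $c$ connected components $T_1,\dots,T_c$, with $v_i\in V(T_i)$. The key observation is that each $T_i$ has at least two nodes: $v_i$ is a cut vertex, hence has degree at least $2$ in $T$, hence has a block-neighbour other than $B$, and that block lies in $T_i$. A tree with at least two nodes has at least two leaves, so $T_i$ has at least two leaves; and any leaf of $T_i$ other than $v_i$ is not adjacent to $B$ in $T$ (else it would be one of the $v_j$, forcing it to be $v_i$), so it has the same degree in $T_i$ as in $T$ and is thus a leaf of $T$, i.e.\ an end-block. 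Hence each $T_i$ contains an end-block $B_i$ of $G$, and the $B_i$ are pairwise distinct and distinct from $B$ (they lie in different components of $T-B$).

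To finish, I would simply count. Exactly $k-c$ of the $k$ vertices of $B$ are non-cut vertices of $G$. Each end-block $B_i$ contains a non-cut vertex $w_i$ of $G$, and $w_i$ lies in the unique block $B_i$; since $B_1,\dots,B_c,B$ are pairwise distinct, the vertices $w_1,\dots,w_c$ together with the $k-c$ non-cut vertices of $B$ are all distinct. This exhibits $(k-c)+c=k$ distinct non-cut vertices of $G$, proving the lemma.

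I expect the only real obstacle to be the bookkeeping around $T-B$: checking that the cut vertices of $B$ really do land in distinct components of $T-B$, that each such component is non-trivial, and that a leaf of a component other than the pendant cut vertex $v_i$ is genuinely a leaf of $T$. All three points reduce to $T$ being a tree together with the degree characterisation of cut vertices, so they should be routine once set up carefully.
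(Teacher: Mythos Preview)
Your proof is correct and follows essentially the same approach as the paper: both arguments work in the block--cut tree, identify for each cut vertex $v_i\in V(B)$ a leaf end-block in the branch of the tree hanging off $v_i$, and pair the non-cut vertex found there with $v_i$ to reach a total count of $k$. The only difference is cosmetic --- the paper roots the tree at $B$ and speaks of ``the subtree rooted at $v$'', while you delete $B$ and speak of the component $T_i$ containing $v_i$; your version is somewhat more explicit about why the end-blocks $B_i$ (and hence the $w_i$) are pairwise distinct.
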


\begin{proof}
Let ${\cal B}$ be the block tree of $G$; that is, the bipartite graph whose vertices are the blocks and the cut vertices of $G$, with $v$ and $B$ adjacent in ${\cal B}$ if and only if $v$ is a cut vertex of $G$ and $B$ is a block of $G$ containing $v$. Choose a block $B$ of order $k$ to be the root of the tree  ${\cal B}$, and observe that any leaf of ${\cal B}$ must be a block. If $v \in V(B)$ is a cut vertex of $G$, consider any block $B'$ that is a leaf in the subtree of ${\cal B}$ rooted at $v$. Since $G$ is loopless, $B'$ has at least two vertices, and since $B'$ is a leaf in ${\cal B}$, only one of them is a cut vertex of $G$. Hence each $v \in V(B)$ is either not a cut vertex of $G$, or else the subtree rooted at $v$ contains a vertex that is not a cut vertex of $G$. The result follows.
\end{proof}

For the following four results --- Corollary~\ref{cor:noncut} through Corollary~\ref{cor:dimcycle} --- we assume that $H$ is an arbitrary hypergraph (not necessarily covering or 3-uniform) with an Euler family $\F$ and incidence graph $G$, and  $\G$ is the subgraph of $G$ corresponding to $\F$. \\

\begin{cor}\label{cor:noncut} Let $G_1$ be a non-trivial connected component of $\G$.  Then at least two v-vertices of $G_1$ are not cut vertices of $\G$.  Furthermore, if $G_1$ has a cycle of length $2k$, then at least $k$ v-vertices of $G_1$ are not cut vertices of $\G$.
\end{cor}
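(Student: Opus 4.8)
The plan is to deduce Corollary~\ref{cor:noncut} directly from Lemma~\ref{prop:noncut} applied to the graph $\G$, using the bipartite structure of the incidence graph. First I would observe that $G_1$, being a non-trivial connected component of $\G\subseteq G$, is itself a connected loopless graph in which every e-vertex has degree exactly $2$. Hence no e-vertex can be a cut vertex of $G_1$ (a vertex of degree $2$ is never a cut vertex — removing it leaves its two neighbours joined through the rest of the unique cycle structure, or more simply, an endpoint of a block of order $\geq 2$ that is not the ``other'' end). So every cut vertex of $\G$ lying in $G_1$ is a v-vertex, and likewise every non-cut vertex that Lemma~\ref{prop:noncut} produces inside a block must be a v-vertex unless that block is a single edge $ve$. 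I would therefore want blocks of order at least $3$.

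For the first assertion, I would note that since $G_1$ is non-trivial it contains an edge, hence a block $B$ of order $k\geq 2$; by Lemma~\ref{prop:noncut}, $G_1$ has at least $k\geq 2$ vertices that are not cut vertices of $G_1$. The mild subtlety is that Lemma~\ref{prop:noncut} gives non-cut vertices of $G_1$, whereas we want non-cut vertices of $\G$; but since $G_1$ is a connected component of $\G$, a vertex of $G_1$ is a cut vertex of $\G$ if and only if it is a cut vertex of $G_1$, so these notions coincide. It remains to upgrade ``vertices'' to ``v-vertices.'' Here I would use that $G_1$ is bipartite with every e-vertex of degree $2$: if $B$ is a block of order $2$, it consists of a v-vertex and an e-vertex joined by an edge (a cut edge), and this situation can only arise when $G_1$ has no cycle; but a non-trivial component in which every e-vertex has degree $2$ must contain a cycle (follow edges around; each e-vertex has an in- and out-edge), so $G_1$ has a block $B$ of order $\geq 3$, giving at least $3$ non-cut vertices, at least two of which are v-vertices since a block of order $3$ in a bipartite graph has at most one vertex on the smaller side — wait, more carefully: a cycle through $B$ alternates v- and e-vertices, so $B$ has at least two v-vertices and at least two e-vertices when $|V(B)|\geq 4$, and when $|V(B)|=3$ one checks $B$ cannot be a path, so $B$ is a triangle, impossible in a bipartite graph. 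Thus $|V(B)|\geq 4$ is even, and among the $\geq 4$ non-cut vertices the e-vertices (being never cut vertices) do not help, but the counting by sides of the bipartition gives at least two v-vertices among them.

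Let me restructure the v-vertex count more cleanly, since that is the only real content. For the ``furthermore'' part, suppose $G_1$ has a cycle of length $2k$; then it has a block $B$ containing that cycle, so $|V(B)|\geq 2k$. Applying Lemma~\ref{prop:noncut}, $G_1$ has at least $2k$ vertices that are not cut vertices of $\G$. Since $G$ (hence $G_1$) is bipartite with parts the v-vertices and e-vertices, and since \emph{every} e-vertex of $G_1$ has degree $2$ in $\G$ and is therefore not a cut vertex, the set $S$ of non-cut vertices partitions into its v-vertices and e-vertices. The e-vertices contribute: they are exactly all e-vertices of $G_1$, of which there are $|E(G_1)|/2 \cdot \ldots$ — rather than count e-vertices directly, I would argue by the block $B$: within $B$, Lemma~\ref{prop:noncut}'s proof shows each vertex of $B$ is either a non-cut vertex or has a non-cut vertex in its pendant subtree, and the non-cut vertices of $B$ itself come in both colours; a clean way is: the cycle $C_{2k}$ inside $B$ alternates colours, so it has exactly $k$ v-vertices and $k$ e-vertices; every vertex \emph{on} $C_{2k}$ that is a cut vertex of $\G$ spawns (as in the lemma's proof) a distinct non-cut vertex in a pendant subtree, and that non-cut vertex can be taken to have the same colour parity issue handled by one further step down the tree. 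The cleanest route, and the one I would write up, is simply: by Lemma~\ref{prop:noncut} there are $\geq 2k$ non-cut vertices of $\G$ in $G_1$; at most $k$ of the e-vertices among them lie on $C_{2k}$, but we make no claim about e-vertices — instead, delete all e-vertices from consideration and note that in the block tree argument, restricting attention to pendant leaf-blocks and choosing the \emph{v-vertex} in each (a leaf block of order $2$ is $\{v,e\}$ with $e$ the non-cut vertex — bad), so I would instead pass to leaf blocks of order $\geq 3$. The main obstacle, and the place I expect to spend the most care, is exactly this colour-bookkeeping: ensuring that the non-cut vertices extracted from Lemma~\ref{prop:noncut} can be chosen to be v-vertices in the required quantity, which hinges on the fact that leaf blocks of an incidence-graph subgraph with all e-degrees equal to $2$ cannot be single edges and, being bipartite and $2$-connected, must contain a cycle and hence at least one non-cut v-vertex; iterating/refining this count down the block tree yields the bound $k$.
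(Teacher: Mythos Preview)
Your approach---applying Lemma~\ref{prop:noncut} directly to $G_1$ and then trying to extract v-vertices from the resulting non-cut vertices---never closes. The lemma, applied to a block of order $\geq 2k$ in $G_1$, hands you at least $2k$ non-cut vertices, but nothing prevents all or most of these from being e-vertices; indeed, since \emph{every} e-vertex of $G_1$ is a non-cut vertex (because $\G$ is an even graph and hence bridgeless---note that your parenthetical ``a vertex of degree $2$ is never a cut vertex'' is false in general and needs exactly this justification), the e-vertices already flood the count. Your final paragraph concedes this and gestures at re-running the block-tree argument while tracking colours, but you do not actually carry it out, and the sketch you give is incorrect in places (e.g.\ a leaf block of order $2$ cannot occur here at all, since $\G$ has no cut edges, so every non-trivial block is $2$-connected and, being bipartite, has order $\geq 4$). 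A careful colour-tracking version of the block-tree argument can in fact be made to work, but it is a genuinely new argument, not a corollary of Lemma~\ref{prop:noncut} as stated.

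The paper sidesteps all of this with one observation you are missing: since every e-vertex of $\G$ has degree exactly $2$, the graph $\G$ is itself the incidence graph of an ordinary loopless graph $H$, and $G_1$ is the incidence graph of a connected component $H_1$ of $H$. Under this correspondence, a cycle of length $2k$ in $G_1$ becomes a cycle of length $k$ in $H_1$ (the anchors are precisely the v-vertices), and a v-vertex of $G_1$ is a cut vertex of $G_1$ if and only if it is a cut vertex of $H_1$. Now apply Lemma~\ref{prop:noncut} to $H_1$: a $k$-cycle sits in a block of order $\geq k$, so $H_1$ has at least $k$ non-cut vertices, and these are exactly non-cut v-vertices of $G_1$. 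The colour-bookkeeping problem simply does not arise, because in $H_1$ every vertex is already a v-vertex.
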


\begin{proof} Since the e-vertices of $\G$ each have degree 2, observe that $\G$ is the incidence graph of a loopless graph (2-uniform hypergraph)  $H$, and $G_1$ is the incidence graph of a non-trivial connected component $H_1$ of $H$.  A cycle $C$ of $G_1$ corresponds to a cycle of $H_1$ whose anchors are the v-vertices of $C$ \cite[Lemma 3.6]{BS1}, and cut vertices of $G_1$ that are v-vertices correspond to cut vertices of $H_1$ \cite[Theorem 3.23]{BS1}.  Since $H_1$ has a block of order at least 2, and any cycle of $H_1$ is contained in a block, the result follows by applying Lemma~\ref{prop:noncut} to $H_1$.
\end{proof}

\phantom{a}

\begin{lem}\label{lem:dimcycle} Let $C$ be an $\F$-interchanging cycle in $G$, and let  $G_1, \ldots, G_k$ be all of the connected components of $\G$ that contain the vertices of $C$. Assume that $G_i\sm  E(C)$, for each $i=1, \ldots, k$, has at most one non-trivial connected component. Then $G^*=(\G\Delta C)[V(G_1)\cup \ldots \cup V(G_k)]$ has at most one non-trivial connected component.
\end{lem}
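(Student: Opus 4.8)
The plan is to track how the symmetric difference with $C$ affects connectivity, component by component. First I would record the basic structural facts about $C$: since $C$ is a cycle in the bipartite incidence graph $G$, it alternates between v-vertices and e-vertices, and being $\F$-interchanging means each e-vertex $e$ on $C$ has exactly one of its two incident $C$-edges in $\G$ (equivalently, exactly one in $\G\Delta C$, so degrees of e-vertices are preserved at $2$). Consequently, within each $G_i$, the edge set $E(C)\cap E(G_i)$ decomposes into a union of paths (the maximal sub-walks of $C$ lying in $G_i$), each of which begins and ends at a v-vertex of $G_i$ — an e-vertex can never be an endpoint of such a path, precisely because it has one $C$-edge inside $G_i$ and the ``interchanging'' condition forces its other $C$-edge to be the $\G$-edge, which also lies in $G_i$. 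So $C$ enters and leaves each $G_i$ only through v-vertices.

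Next I would analyze a single component $G_i$. Deleting $E(C)$ from $G_i$ may disconnect it, but by hypothesis $G_i\sm E(C)$ has at most one non-trivial component; call it $G_i^0$ (possibly empty), and note every other vertex of $G_i$ sits in a trivial component of $G_i\sm E(C)$, hence is an isolated vertex there, so all of its incident $G_i$-edges lie on $C$. Now form $(\G\Delta C)[V(G_i)]$: relative to $G_i\sm E(C)$ we have added back the $C$-edges \emph{not} in $\G$. I claim $G_i^0$ together with all these re-added edges forms a connected graph on $V(G_i)$ — equivalently, every vertex of $G_i$ gets reconnected to $G_i^0$. For an isolated v-vertex $w$ of $G_i\sm E(C)$: $w$ lies on $C$, so it has two $C$-edges, exactly as many as... actually I only need one of its $C$-edges to be a non-$\G$-edge; since $w$ is isolated in $G_i\sm E(C)$, both its $G_i$-edges are on $C$, and $w$ had even degree in $\G$, so not both of its $C$-edges can be $\G$-edges unless $\deg_{\G\cap C}(w)$ is even and equals $\deg_{G_i}(w)$ — in the generic case at least one $C$-edge at $w$ is re-added. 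Following that re-added edge along $C$ and using the fact that traversing $C$ one eventually returns to $V(G_i^0)$ (the only non-trivial piece), one threads a path in $(\G\Delta C)[V(G_i)]$ from $w$ into $G_i^0$. The careful version of this argument, handling the degenerate subcases where $G_i^0$ is empty or where some v-vertex has all its $C$-edges in $\G$, is the technical heart of the step.

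Finally I would glue the components together. The cycle $C$, as it is traversed, passes through the $G_i$'s in some cyclic order, moving from one to the next \emph{via v-vertices} (by the first paragraph) along re-added non-$\G$-edges — because a $C$-edge joining two distinct components $G_i,G_j$ cannot be a $\G$-edge, as $\G$-edges stay inside a component. Hence in $\G\Delta C$ these inter-component $C$-edges are all present, and they stitch the (now each internally connected, by the previous step) pieces $(\G\Delta C)[V(G_i)]$ into a single connected graph. Therefore $G^*=(\G\Delta C)[V(G_1)\cup\cdots\cup V(G_k)]$ has exactly one non-trivial connected component (or is itself trivial in the edge-degenerate case), as required. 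I expect the main obstacle to be the second step: cleanly proving that each isolated vertex of $G_i\sm E(C)$ is reconnected to $G_i^0$ inside $\G\Delta C$, while correctly disposing of the boundary cases (empty $G_i^0$, v-vertices whose every $C$-edge is a $\G$-edge, and components $G_i$ meeting $C$ in a single path versus several paths).
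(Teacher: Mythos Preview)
Your overall architecture --- decompose $C$ into maximal subwalks $W_i$ lying in single components $G_{j_i}$, observe that the transition edges between consecutive $W_i$'s are non-$\G$-edges, then glue --- is exactly the paper's. Your first paragraph is correct and matches the paper's setup.

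The gap is in your second step. The claim ``$G_i^0$ together with all these re-added edges forms a connected graph on $V(G_i)$'' (equivalently, every vertex of $G_i$ gets reconnected to $G_i^0$) is \emph{false}, and this is not just a boundary case. Take a subwalk $W=u\,e_1\,w\,e_2\,v$ inside $G_i$ with $e_1w,\,we_2\in E(\G)$ and $\deg_{\G}(w)=2$. Then $w$ is isolated in $G_i\sm E(C)$, both of its $C$-edges are $\G$-edges, so neither is re-added, and $w$ is isolated in $(\G\Delta C)[V(G_i)]$ as well. Your proposed argument ``follow a re-added edge along $C$ until you return to $G_i^0$'' cannot start at such a $w$, and even when it can start, you cannot simply walk along $C$ inside $\G\Delta C$, since the $\G$-edges of $C$ have been deleted.

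Fortunately you are proving too much: the lemma asks only for \emph{at most one non-trivial} component, so stray isolated vertices are harmless. What you actually need is that the \emph{endpoints} $u_i,v_i$ of each maximal subwalk $W_i$ lie in $G_{j_i}^0$ whenever $G_{j_i}$ is non-trivial. The paper proves exactly this, in one line: one of the two $C$-edges at $u_i$ crosses between components, hence is a non-$\G$-edge; since $u_i$ has even $\G$-degree at least $2$ (if a v-vertex) or exactly one $\G$-edge on $C$ (if an e-vertex, by the interchanging condition), at most one $\G$-edge at $u_i$ lies on $C$, so some $\G$-edge at $u_i$ survives in $G_{j_i}\sm E(C)$, placing $u_i$ in $G_{j_i}^0$. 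Once you know this, the transition edges $v_iu_{i+1}$ (all non-$\G$, hence all present in $\G\Delta C$) link all the $G_{j_i}^0$'s into a single non-trivial component of $G^*$; everything outside it is isolated, and you are done. Replace your second step with this argument and the proof goes through.
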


\begin{proof} Write $C$ as
$$C=u_0W_0v_0u_1W_1v_1\ldots u_{\ell-1}W_{\ell-1}v_{\ell-1}u_0,$$ where each $W_i$, for $i\in\mathds{Z}_{\ell}$, is a maximal subwalk of $C$ whose vertices lie in a single connected component of $\G$, and
$u_i$ and $v_i$ are its endpoints. Let $G_{j_i}$ be the connected component of $\G$ containing the $u_iv_i$-walk $W_i$, and suppose $G_{j_i}$ is non-trivial. Then $u_i$ and $v_i$ cannot be isolated in $G_{j_i}$. If they were isolated in $G_{j_i}\sm  E(C)$, then each would be incident with two $\G$-edges of $C$ because $G_{j_i}$ is an even graph ---  a contradiction. Hence $u_i$ and $v_i$ lie in the unique non-trivial connected component of $G_{j_i}\sm  E(C)$.

Thus $C$ traverses vertices in all of the non-trivial connected components of $G_i\sm  E(C)$, for $i=1,\ldots,k$. Since the edges $v_0u_1, v_1u_2, \ldots, v_{\ell-1}u_0$ link these non-trivial components in a circular fashion, $G^*=\bigcup_{i=1}^k (G_i\sm  E(C) ) + \{ v_iu_{i+1}: i \in \mathds{Z}_{\ell} \}$ has at most one non-trivial connected component.
\end{proof}

\phantom{a}

\begin{cor}\label{cor:simpledimcycle}
Let $C$ be an $\F$-interchanging cycle of $G$.  If the $\G$-edges of $C$ lie in distinct connected components of $\G$, then $C$ is an $\F$-diminishing cycle.
\end{cor}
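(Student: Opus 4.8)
The plan is to reduce the statement to Lemma~\ref{lem:dimcycle} together with a short count of non-trivial components. Let $G_1,\ldots,G_k$ be all the connected components of $\G$ containing a vertex of $C$. Since the $\G$-edges of $C$ lie in distinct connected components of $\G$, each $G_i$ contains at most one $\G$-edge of $C$, so $G_i\sm E(C)$ is $G_i$ with at most one edge removed.

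The step I expect to be the crux is verifying the hypothesis of Lemma~\ref{lem:dimcycle}, namely that each $G_i\sm E(C)$ has at most one non-trivial connected component. If $G_i$ is a single v-vertex lying on $C$, there is nothing to check. Otherwise $G_i$ is non-trivial, and here I would use the observation (as in the proof of Corollary~\ref{cor:noncut}) that, because every e-vertex has degree $2$ and every v-vertex has even degree in $\G$, the component $G_i$ is the incidence graph of a connected loopless \emph{even} graph $L_i$. A connected even graph has no bridge: deleting a bridge would leave a component containing exactly one odd-degree vertex, contradicting the handshake lemma. Hence deleting a single edge from $G_i$ leaves it connected, so $G_i\sm E(C)$ is either $G_i$ or $G_i$ with one edge removed; in both cases it is connected and still contains an edge (the e-vertex of the removed edge still has degree $1$), hence has exactly one non-trivial component. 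Lemma~\ref{lem:dimcycle} then yields that $G^*=(\G\Delta C)[V(G_1)\cup\cdots\cup V(G_k)]$ has at most one non-trivial connected component.

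It remains to count. Since $G$ is a simple bipartite graph, $C$ has at least two e-vertices, and each e-vertex $e$ of $C$ contributes exactly one $\G$-edge of $C$ (because $\deg_{\G\cap C}(e)=1$), which lies in the non-trivial component of $\G$ containing $e$. Distinct e-vertices yield distinct $\G$-edges, and by hypothesis these lie in distinct components; hence at least two of $G_1,\ldots,G_k$ are non-trivial. Finally, $\G\Delta C$ agrees with $\G$ away from $V(G_1)\cup\cdots\cup V(G_k)$ (every vertex of $C$, hence every edge of $C$, lies inside $V(G_1)\cup\cdots\cup V(G_k)$), so the non-trivial components of $\G\Delta C$ are precisely those of $G^*$ together with the non-trivial components of $\G$ disjoint from $C$. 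Comparing with the same count for $\G$ — which has at least two non-trivial $G_i$'s where $\G\Delta C$ has at most the one non-trivial component of $G^*$ — shows that $\G\Delta C$ has strictly fewer non-trivial components than $\G$. Therefore $C$ is $\F$-diminishing. Apart from the bridgeless observation for $G_i$, the whole argument is routine bookkeeping with the symmetric difference and a parity count of the e-vertices of $C$.
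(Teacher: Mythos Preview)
Your proof is correct and follows essentially the same route as the paper: bound $|E(G_i)\cap E(C)|\le 1$ from the hypothesis, use that each non-trivial $G_i$ is an even graph (hence bridgeless) to get $G_i\sm E(C)$ connected, apply Lemma~\ref{lem:dimcycle}, and count to conclude strict decrease. Your detour through ``$G_i$ is the incidence graph of an even graph $L_i$'' is unnecessary since $G_i$ is itself an even graph, and your final component count is spelled out more explicitly than in the paper, but the argument is structurally identical.
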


\begin{proof}
Since $C$ has at least two $\G$-edges, it traverses vertices from at least two non-trivial connected components of $\G$.
Each such component $G_i$ is an even graph with $|E(G_i)\cap E(C)|\leq 1$, so $G_i\sm  E(C)$ is connected.  Applying Lemma~\ref{lem:dimcycle}, we conclude that the number of non-trivial connected components of $\G\Delta C$ is less than that of $\G$, so $C$ is an $\F$-diminishing cycle.
\end{proof}

\phantom{a}

\begin{cor}\label{cor:dimcycle} Let $C$ be an $\F$-interchanging cycle in $G$.
\begin{description}
\item[(1)] If the v-vertices of $C$ lie in distinct connected components $G_1,\ldots , G_k$ of $\G$, at least two of which are non-trivial, and none of these v-vertices are cut vertices of $\G$, then $C$ is an $\F$-diminishing cycle.
\item[(2)] If in addition,  $C$ traverses a v-vertex in every non-trivial connected component of $\G$, then  $\G\Delta C$ is a subgraph of $G$ corresponding to an Euler tour of $H$.
\end{description}
\end{cor}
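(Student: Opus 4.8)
The plan is to verify the hypothesis of Lemma~\ref{lem:dimcycle} for $C$ and then compare the number of non-trivial connected components of $\G$ and of $\G\Delta C$. The starting point is a structural observation about how $C$ meets the components $G_1,\dots,G_k$. Since $G$ is the incidence graph of $H$ and hence bipartite, the cycle $C$ alternates between v-vertices and e-vertices; and since $C$ is $\F$-interchanging, each e-vertex $e$ on $C$ is incident with exactly one $\G$-edge of $C$, which joins $e$ to one of its two neighbours on $C$ and so places $e$ in the same component of $\G$ as that neighbour. Hence every vertex of $C$ lies in $V(G_1)\cup\dots\cup V(G_k)$. Because the v-vertices of $C$ lie in pairwise distinct components, each $G_i$ contains exactly one v-vertex of $C$, say $w_i$; and since an edge of $G_i$ lying on $C$ joins a v-vertex of $C$ (necessarily $w_i$) to an e-vertex, every such edge is incident with $w_i$.

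I would then check that $G_i\sm E(C)$ has at most one non-trivial connected component for every $i$. This is immediate when $G_i$ is trivial. If $G_i$ is non-trivial, then, as its e-vertices have degree $2$, it has at least three vertices, so $G_i-w_i$ has at least two vertices; and since $w_i$ is not a cut vertex of $\G$, $G_i-w_i$ is connected, hence a non-trivial connected graph. By the previous paragraph the only edges of $G_i$ deleted in passing to $G_i\sm E(C)$ are incident with $w_i$, so $(G_i\sm E(C))-w_i=G_i-w_i$; thus all vertices of $G_i$ other than $w_i$ lie in a single connected component of $G_i\sm E(C)$, and $w_i$ either joins that component or is isolated. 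Either way $G_i\sm E(C)$ has at most one non-trivial component, so Lemma~\ref{lem:dimcycle} applies and $G^*=(\G\Delta C)[V(G_1)\cup\dots\cup V(G_k)]$ has at most one non-trivial connected component.

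To finish, I would count. Since every vertex of $C$ lies in $V(G_1)\cup\dots\cup V(G_k)$ and the $G_i$ are connected components of $\G$, no edge of $\G\Delta C$ joins $V(G^*)$ to its complement, so $\G\Delta C$ is the disjoint union of $G^*$ and the components of $\G$ not among $G_1,\dots,G_k$. Writing $N$ for the number of non-trivial components of $\G$ and $m$ for the number of non-trivial $G_i$ (so $m\ge 2$), we obtain that $\G\Delta C$ has at most $(N-m)+1\le N-1<N$ non-trivial components. By Lemma~\ref{lem:interchange}, $\G\Delta C$ corresponds to an Euler family of $H$, and since it has strictly fewer non-trivial components than $\G$, the cycle $C$ is $\F$-diminishing, proving part~(1). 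For part~(2), the additional hypothesis forces $m=N$, so $\G\Delta C$ has at most one non-trivial component; as $\G\Delta C$ still contains all e-vertices of $H$, each of degree $2$, it has exactly one, and by Lemma~\ref{lem:interchange} together with Theorem~\ref{thm:incidencegraph}(2) it corresponds to an Euler tour of $H$.

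I do not anticipate a serious obstacle: the crux is the structural observation of the first paragraph, namely that inside each $G_i$ the cycle $C$ touches edges only at the single vertex $w_i$, so that deleting the non-cut-vertex $w_i$ removes at least the edges $E(C)\cap E(G_i)$. The points needing care are the small degenerate cases (a trivial $G_i$, or $w_i$ becoming isolated in $G_i\sm E(C)$) and, in the final count, making sure that the components of $\G$ untouched by $C$ genuinely survive intact in $\G\Delta C$.
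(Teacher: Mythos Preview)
Your proof is correct and follows essentially the same approach as the paper: both verify the hypothesis of Lemma~\ref{lem:dimcycle} by showing that $G_i\sm E(C)$ has at most one non-trivial component (using that the unique v-vertex $w_i$ of $C$ in $G_i$ is not a cut vertex), and then compare component counts. You spell out more explicitly than the paper does the structural point that every $\G$-edge of $C$ lying in $G_i$ must be incident with $w_i$, which is exactly what makes the passage from ``$G_i-w_i$ connected'' to ``$G_i\sm E(C)$ has at most one non-trivial component'' work; the paper leaves this implicit.
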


\begin{proof}
{\bf (1)} Take any $i\in\{1,\ldots ,k\}$ such that $G_i$ is non-trivial, and let $v$ be a v-vertex in $V(C)\cap V(G_i)$.  Since $v$ is not a cut vertex of $\G$, we have that $G_i - v$ is connected.  It follows that $G_i\sm  E(C)$ has at most one non-trivial connected component, and Lemma~\ref{lem:dimcycle} implies that $(\G\Delta C)[V(G_1)\cup\ldots\cup V(G_k)]$ has just one non-trivial connected component.  Since $\G[V(G_1)\cup\ldots\cup V(G_k)]$ has at least two non-trivial connected components, we conclude that $C$ is an $\F$-diminishing cycle.

{\bf (2)} Now $(\G\Delta C)[V(G_1)\cup\ldots\cup V(G_k)]$ has exactly one non-trivial connected component and it contains all vertices of $\G\Delta C$ that are not isolated. Hence $\G\Delta C$ has exactly one non-trivial connected component, and it corresponds to an Euler tour of $H$ by Theorem~\ref{thm:incidencegraph}.
\end{proof}

\phantom{a}

\begin{cor}\label{cor:3comps}
Let $H$ be a covering 3-hypergraph with an Euler family $\F$ such that $\G$ has at least three connected components. Then $H$ is eulerian.
\end{cor}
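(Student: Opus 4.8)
The plan is to pass in a single step from $\F$ to an Euler tour, using Corollary~\ref{cor:dimcycle}(2): I will build an $\F$-interchanging cycle $C$ of $G$ whose v-vertices are one non-cut v-vertex from each non-trivial connected component of $\G$, chosen so that $C$ meets every non-trivial component; then $\G\Delta C$ corresponds to an Euler tour of $H$, and $H$ is eulerian.

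First I would handle the boundary cases. Since the components of $\F$ correspond bijectively to the non-trivial components of $\G$, if $\G$ has at most one non-trivial component then $\F$ --- which is non-empty because $H$ is --- has exactly one component and is itself an Euler tour. So assume $\G$ has at least two non-trivial components, and let $G_1,\dots,G_m$, $m\ge 3$, be the components to be linked: all of the non-trivial components if there are at least three of them, and otherwise the two non-trivial components together with an isolated v-vertex (one exists since $\G$ has at least three components in total), viewed as a surrogate third component. For each $i$ choose, using Corollary~\ref{cor:noncut}, a v-vertex $a_i\in V(G_i)$ that is not a cut vertex of $\G$; the unique v-vertex of a singleton component is trivially not a cut vertex.

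Because $H$ is covering, for every $i\in\ZZ_m$ there is an edge $f_i\in E(H)$ containing $a_i$ and $a_{i+1}$; put $C=a_1f_1a_2f_2\cdots a_mf_ma_1$. The covering hypothesis now does all the work, via the following observation: if $u$ and $w$ lie in distinct components of $\G$ and $f\in E(H)$ contains both, then the subwalk $ufw$ is $\F$-interchanging at $f$. Indeed $\deg_{\G}(f)=2$, so if neither $fu$ nor $fw$ is a $\G$-edge then $f$ has at most one $\G$-edge --- impossible --- while if both are $\G$-edges then $u$ and $w$ lie in a common component of $\G$; hence exactly one of $fu,fw$ is a $\G$-edge, and $\deg_{\G\cap C}(f)=1$. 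Applying this at $f_1,\dots,f_m$ shows $C$ is $\F$-interchanging. Also, $C$ is a genuine $2m$-cycle: the $a_i$ are pairwise distinct since they lie in distinct components, and the $f_i$ are pairwise distinct because $f_i=f_j$ with $i\ne j$ would force the size-$3$ edge $f_i$ to contain three of the $a$'s (here one uses $m\ge 3$), hence to have its three vertices in three distinct components of $\G$ --- but then, as before, $f_i$ would carry at most one $\G$-edge, contradicting $\deg_{\G}(f_i)=2$.

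Consequently $C$ is an $\F$-interchanging cycle whose v-vertices lie in the distinct components $G_1,\dots,G_m$, at least two of which are non-trivial, none of them a cut vertex of $\G$, and $C$ meets every non-trivial component; Corollary~\ref{cor:dimcycle}(2) then gives that $\G\Delta C$ corresponds to an Euler tour of $H$. I expect the only delicate points to be the opening reduction --- in particular noticing that with exactly two non-trivial components one must, and can, recruit an isolated v-vertex as a third component --- and the verification that the $f_i$ are pairwise distinct; the apparently difficult matter of controlling which incidences along $C$ are $\G$-edges evaporates once one notices that every e-vertex has $\G$-degree exactly $2$.
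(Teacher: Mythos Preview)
Your proof is correct and follows essentially the same approach as the paper's: choose a non-cut v-vertex from each of at least three components of $\G$, link them cyclically via edges guaranteed by the covering property, verify that the resulting cycle is $\F$-interchanging with distinct e-vertices (since any edge contains at most two of the chosen v-vertices), and apply Corollary~\ref{cor:dimcycle}(2). The only cosmetic difference is that the paper uses \emph{all} components of $\G$ (after observing there is at most one trivial one), whereas you use just the non-trivial components, padding with a single isolated v-vertex when there are exactly two; both choices satisfy the hypotheses of Corollary~\ref{cor:dimcycle}(2).
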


\begin{proof}
First, observe that $\G$ has at most one trivial connected component, for if $u$ and $v$ are distinct v-vertices that are isolated in $\G$, then any edge of $H$ containing both $u$ and $v$ is not traversed by $\F$. Hence $\G$ has at least two non-trivial connected components.

Let $G_1,\ldots , G_k$ be the connected components of $\G$.  For each $i\in\{1,\ldots ,k\}$, choose a vertex $v_i$ in $G_i$ such that $v_i$ is not a cut vertex of $G_i$. If $G_i$ is non-trivial, then this is possible by Lemma~\ref{prop:noncut}; otherwise, the unique vertex of $G_i$ is not a cut vertex of $G_i$. 
As no two vertices among $v_1,\ldots , v_k$ lie in the same connected component of $\G$,
every e-vertex is adjacent in $\G$ to at most one of them, and hence no three of these vertices  lie in the same edge of $H$.
Therefore,  there exist e-vertices $e_1,\ldots , e_k$ such that $C=v_1e_1v_2\ldots v_ke_kv_1$ is a cycle in $G$.
Moreover, each $e_i$ is incident with exactly one $\G$-edge of $C$, so $C$ is an $\F$-interchanging cycle.
Since $C$ satisfies the conditions of Corollary~\ref{cor:dimcycle}(2), we conclude that $H$ is eulerian.
\end{proof}

\phantom{a}

\begin{cor}\label{lem:isolated} Let $H$ be a covering 3-hypergraph and  $\F$ a minimum Euler family of $H$.   If $H$ is not eulerian, then the following hold.
\begin{description}
\item[(1)] $|\F|= 2$ and $\G$ has no isolated vertices.
\item[(2)] If $C$ is an $\F$-interchanging cycle of $G$, then $\G\Delta C$ corresponds to a minimum Euler family of $H$.
\item[(3)] $G$ has no $\F$-diminishing cycle.
\end{description}
\end{cor}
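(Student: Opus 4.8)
The plan is to prove the three statements of Corollary~\ref{lem:isolated} in order, using the earlier machinery and the assumption that $H$ is \emph{not} eulerian together with the minimality of $\F$.

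\textbf{Part (1).} First I would invoke Theorem~\ref{cor:quasieuleriancovering}, which guarantees that $H$ has an Euler family, so a minimum Euler family $\F$ exists. Since $H$ is not eulerian, $|\F|\ge 2$. By Corollary~\ref{cor:3comps}, if $\G$ had at least three connected components then $H$ would be eulerian; hence $\G$ has at most two connected components that are non-trivial plus possibly some trivial ones. The key observation (already used in the proof of Corollary~\ref{cor:3comps}) is that $\G$ can have at most one trivial connected component, since two isolated v-vertices $u,v$ would leave any edge of $H$ through both $u$ and $v$ untraversed — and such an edge exists because $H$ is covering (any $2$-subset lies in an edge). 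So the number of non-trivial components of $\G$ is exactly $|\F|$, and we have $|\F| + (\text{number of trivial components}) \le$ total number of components. If $|\F| = 2$ and there is one trivial component, then $\G$ has three components total; I'd need to rule this out. Indeed, if $v$ is the lone isolated v-vertex, then I can build an $\F$-interchanging cycle through $v$ and through a non-cut v-vertex in each of the two non-trivial components (exactly as in Corollary~\ref{cor:3comps}, using that no three such vertices share an edge), and Corollary~\ref{cor:dimcycle}(2) makes $H$ eulerian — contradiction. Hence $\G$ has no isolated vertices and has exactly two (non-trivial) components, so $|\F| = 2$.

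\textbf{Part (2).} Let $C$ be an $\F$-interchanging cycle. By Lemma~\ref{lem:interchange}, $\G\Delta C$ corresponds to an Euler family $\F'$ of $H$, so $|\F'| \ge |\F|$ by minimality of $\F$. On the other hand, applying the degree identity from the proof of Lemma~\ref{lem:interchange}, symmetric difference with a cycle cannot increase the number of non-trivial components by more than... — more carefully, I would argue that $|\F'| \le |\F|$: the cycle $C$, being connected, meets its component set in a way that can only merge components or leave the count unchanged; it cannot split one component into two, because removing the edges of a cycle from an even graph and then adding back the "linking" edges keeps things connected along $C$ (this is precisely the content of Lemma~\ref{lem:dimcycle} with the trivial hypotheses, or can be seen directly since $\G\cap C$ has each e-vertex of degree $1$ forces $C$ to alternate in a controlled way). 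Combining $|\F'|\ge|\F|$ and $|\F'|\le|\F|$ gives $|\F'| = |\F|$, so $\F'$ is minimum.

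\textbf{Part (3).} This is immediate from (2): if $C$ were an $\F$-diminishing cycle, then by definition $\G\Delta C$ has strictly fewer non-trivial connected components than $\G$, i.e. the corresponding Euler family $\F'$ has $|\F'| < |\F|$, contradicting the minimality of $\F$ (which we just re-derived, or directly contradicting that $\F$ is minimum). Hence $G$ has no $\F$-diminishing cycle.

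\textbf{Main obstacle.} The delicate point is Part (1): pinning down that $\G$ has exactly two components and no isolated vertex requires carefully combining the "no two isolated v-vertices" argument with a construction of an $\F$-diminishing (in fact Euler-tour-producing) cycle in the hypothetical case of one isolated vertex plus two non-trivial components — one must check that a single isolated v-vertex, a non-cut v-vertex in each non-trivial component, and the covering property indeed furnish a cycle $C$ in $G$ whose e-vertices each meet exactly one $\G$-edge, so that Corollary~\ref{cor:dimcycle}(2) applies. The bound $|\F'|\le|\F|$ in Part (2) also needs the structural input of Lemma~\ref{lem:dimcycle} (with its hypotheses automatically satisfied, since in a minimum Euler family each non-trivial component of $\G$ that meets $C$ in at most one $\G$-edge stays connected), rather than being purely formal.
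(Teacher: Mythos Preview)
Your Part~(1) reaches the right conclusion but takes a detour that comes from misreading Corollary~\ref{cor:3comps}. That corollary bounds the \emph{total} number of connected components of $\G$ (trivial ones included), not just the non-trivial ones. Its contrapositive therefore gives, directly, that $\G$ has at most two connected components; since $|\F|\ge 2$ forces at least two non-trivial components, both components are non-trivial and there is no isolated vertex. The extra construction you sketch to rule out a lone isolated vertex is exactly a special case of the proof of Corollary~\ref{cor:3comps} and is not needed.

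Part~(2) has a genuine gap. Your structural claim that an $\F$-interchanging cycle ``cannot split one component into two'' is false in general, and Lemma~\ref{lem:dimcycle} does not apply ``with the trivial hypotheses'': its assumption that each $G_i\setminus E(C)$ has at most one non-trivial connected component can certainly fail. For a concrete counterexample in a covering $3$-hypergraph, take $V=\{1,2,3,4\}$ with edges $a=\{1,2,3\}$, $b=\{2,3,4\}$, $c=\{1,3,4\}$, $d=\{1,2,4\}$, and let $\G$ be the $8$-cycle $1\,a\,2\,b\,3\,c\,4\,d\,1$. The cycle $C=1\,a\,3\,c\,1$ is $\F$-interchanging, yet $\G\Delta C$ consists of the two disjoint $4$-cycles $2\,a\,3\,b\,2$ and $1\,c\,4\,d\,1$; so an interchanging cycle can strictly increase the number of non-trivial components. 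The paper's argument avoids this entirely: once Lemma~\ref{lem:interchange} gives an Euler family $\F'$ corresponding to $\G\Delta C$, one simply applies Corollary~\ref{cor:3comps} to $\F'$ (not to $\F$) to conclude that $G_{\F'}$ has at most two connected components, hence $|\F'|\le 2=|\F|$ and $\F'$ is minimum. You already have Corollary~\ref{cor:3comps} in hand from Part~(1); reusing it for the new family $\F'$ is the missing step. Part~(3) is then immediate, as you say.
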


\begin{proof}
{\bf (1)} By  Corollary~\ref{cor:3comps}, $\G$ has exactly two connected components, and both must be non-trivial. Hence $|\F|= 2$ and $\G$ has no isolated vertices.

{\bf (2)} By Lemma~\ref{lem:interchange}, the graph $\G\Delta C$ corresponds to an Euler family $\F'$ of $H$, and by Corollary~\ref{cor:3comps}, it has at most two connected components.  Hence $\F'$ is another minimum Euler family of $H$.

{\bf (3)} This statement follows immediately from (2).
\end{proof}

\subsection{Case $n\geq 7$}\label{sec:order7}

In this section, we prove Theorem~\ref{thm:covering} for $n\geq 7$; that is, we show that all covering 3-hypergraphs of order at least 7 are eulerian.  This case is restated below as Theorem~\ref{thm:order7}.

Lemma~\ref{lem:order7setup} will furnish our proof with most of the details needed to set up the case work.  In Theorem~\ref{thm:order7}, we will begin with a minimum Euler family $\F$ for $H$ that has two components and  minimizes the degree of an arbitrary, but fixed, vertex $v_0$ in the graph $\G$.  By Corollary~\ref{lem:isolated}(2), we need only find an $\F$-interchanging cycle $C$ --- not necessarily an $\F$-diminishing cycle --- that traverses $v_0$ and such that the Euler family corresponding to $\G \Delta C$ contradicts the assumptions on $\F$.
\\

\begin{lem}\label{lem:order7setup} Let $H$ be a covering 3-hypergraph of order $n\geq 7$ that is not eulerian.  Let $G$ be the incidence graph of $H$ and $v_0$ a v-vertex of $G$.  Let $\F$ be a minimum Euler family of $H$, and $\G$ the subgraph of $G$ corresponding to $\F$, with the property that $\deg_{\G}(v_0)$ is minimum over all minimum Euler families of $H$.

Assume that $\deg_{\G}(v_0) \ge 2$, so
there exist distinct e-vertices $e_1,e_2\in V(G)$ such that $v_0e_1, v_0e_2\in E(\G).$  Let $e_1=v_0v_1a$ and $e_2=v_0v_2b$ for some $v_1,v_2,a,b\in V(H)$ such that $v_1e_1$ and $v_2e_2$ are $\G$-edges.

Then the following hold.
\begin{description}
\item[(1)] Vertices $a$ and $b$ are distinct.
\item[(2)] Suppose there exists an e-vertex $e$ of $G$ with $e\not\in\{e_1,e_2\}$ such that $ea, eb\in E(G)$. Then both $ea$ and $eb$ are $\G$-edges.
\item[(3)] Vertices $a$ and $b$ are connected in $\G$.
\item[(4)] Let $x\in V(H)\sm \{v_0,v_1,v_2\}$ be a v-vertex disconnected from $a$ and $b$ in $\G$.  Then there exists an edge $e_x=abx$ in $H$. Any edge of $H$ containing $x$ and $a$, or $x$ and $b$, must likewise contain $a$ and $b$; and $xe_x$ is a non-$\G$-edge.
\item[(5)] Vertices $a$ and $b$ are disconnected from $v_0$ in $\G$.
\end{description}
\end{lem}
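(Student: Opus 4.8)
The plan is to drive every part of the lemma by the same contradiction. Throughout, Corollary~\ref{lem:isolated} supplies the working environment: $\G$ has exactly two non-trivial connected components, say $G_1\ni v_0$ and $G_2$, and no isolated vertices; for every $\F$-interchanging cycle $C$ the graph $\G\Delta C$ again corresponds to a minimum Euler family of $H$ (Lemma~\ref{lem:interchange}, Corollary~\ref{lem:isolated}(2)); and $G$ has no $\F$-diminishing cycle (Corollary~\ref{lem:isolated}(3)). I would first isolate two facts used repeatedly. \textbf{(a)} Every e-vertex of $G$ has degree $3$ (as $H$ is $3$-uniform) and hence degree $2$ in $\G$, so it is incident with exactly one non-$\G$-edge; for $e_1=v_0v_1a$ this is $ae_1$ (since $v_0e_1,v_1e_1\in E(\G)$), and likewise $be_2$ is the unique non-$\G$-edge at $e_2$. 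Therefore any $\F$-interchanging cycle using the $\G$-edge $v_0e_1$ must use $ae_1$, any one using $v_0e_2$ must use $be_2$, and so a cycle of the shape
\[
C=v_0\,e_1\,a\;P\;b\,e_2\,v_0 ,
\]
with $P$ an $a$--$b$ path in $G$ avoiding $v_0,e_1,e_2$, is $\F$-interchanging exactly when each e-vertex of $P$ meets precisely one $\G$-edge of $P$; and for such a $C$ one has $\deg_{\G\Delta C}(v_0)=\deg_\G(v_0)-2$, contradicting the minimality of $\deg_\G(v_0)$. Thus \emph{no $\F$-interchanging cycle of this shape exists}, and this is the contradiction I aim to produce in (1)--(5). \textbf{(b)} If $u,u'$ are v-vertices in different components of $\G$ and $e$ is any edge of $H$ with $u,u'\in e$, then on the $2$-path $u\,e\,u'$ exactly one of $ue,u'e$ is a $\G$-edge: at most one because $e$ lies in a single component, at least one because $\deg_\G(e)=2$. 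So a ``cross-component'' $2$-path is automatically interchanged at its e-vertex.

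Parts (1) and (2) follow at once. If $a=b$, then $v_0\,e_1\,a\,e_2\,v_0$ is a genuine cycle of the forbidden shape, and it is $\F$-interchanging by (a); contradiction, so $a\ne b$. For (2), write $e=\{a,b,c\}$; since $\deg_\G(e)=2$, at least one of $ea,eb$ is a $\G$-edge, and if only one --- say $eb\in E(\G)$ --- then $v_0\,e_1\,a\,e\,b\,e_2\,v_0$ (a genuine cycle, using $a\ne b$ and $e\notin\{e_1,e_2\}$) is $\F$-interchanging of the forbidden shape; contradiction. Hence both $ea$ and $eb$ are $\G$-edges.

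For (3), suppose $a$ and $b$ lie in different components of $\G$. By the covering property some edge $e$ contains $\{a,b\}$; if $e\notin\{e_1,e_2\}$ then (2) gives $ea,eb\in E(\G)$, forcing $a$ and $b$ into one component --- a contradiction. So $e\in\{e_1,e_2\}$, whence $b=v_1$ (if $e=e_1$, as $b\notin\{v_0,a\}$) or $a=v_2$ (if $e=e_2$); by the symmetry of the hypotheses under $(e_1,v_1,a)\leftrightarrow(e_2,v_2,b)$ assume $a=v_2$, so that $e_2=\{v_0,a,b\}$ with $v_0e_2,ae_2\in E(\G)$ and $be_2$ its unique non-$\G$-edge, $a\in G_1$ and $b\in G_2$. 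The remaining work is to manufacture a forbidden cycle
\[
C=v_0\,e_1\,a\;g_1\;x\;g_2\;b\,e_2\,v_0
\]
by routing $P$ through a v-vertex $x$ of $G_2$ (which exists since $G_2$ is non-trivial): the crossing $2$-path $a\,g_1\,x$ is interchanged automatically by (b), and I would choose $x$ and an edge $g_2\supseteq\{x,b\}$ --- invoking the covering property and the hypothesis $n\ge 7$ to guarantee the required vertices and edges exist and avoid $v_0,e_1,e_2$ --- so that $g_2$ is interchanged as well. If no admissible $g_2$ works, then the incidences at $b$ within $G_2$ become so rigid (every edge meeting $G_2$ in $b$ and a second vertex has those two joined to it by $\G$-edges, so no edge meets $G_2$ in three vertices) that I would re-route $P$ through a second v-vertex of $G_2$ or via a detour into $G_1$, or else produce non-cut v-vertices in both components and invoke Corollary~\ref{cor:dimcycle}(2) (or Corollary~\ref{cor:simpledimcycle} together with the absence of $\F$-diminishing cycles) to finish. \emph{This routing, and the case analysis surrounding it, is the step I expect to be the main obstacle.}

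Part (4) is then clean. Given $x\notin\{v_0,v_1,v_2\}$ disconnected from $a$ and from $b$: by (3), $a$ and $b$ lie in one component, so $x$ lies in the other. Take any edges $e^{\ast}\supseteq\{a,x\}$ and $e^{\ast\ast}\supseteq\{b,x\}$ (they exist by covering, and neither is $e_1$ or $e_2$). The cycle $v_0\,e_1\,a\,e^{\ast}\,x\,e^{\ast\ast}\,b\,e_2\,v_0$ is $\F$-interchanging --- at $e_1,e_2$ by (a), at $e^{\ast}$ and $e^{\ast\ast}$ by (b), since $x$ is in a different component from both $a$ and $b$ --- and of the forbidden shape, so it cannot be a genuine cycle; hence $e^{\ast}=e^{\ast\ast}$, and as $a,b,x$ are pairwise distinct this common edge is $\{a,b,x\}$. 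Since the choices were arbitrary, $e_x:=\{a,b,x\}$ is the unique edge through $\{a,x\}$ and through $\{b,x\}$; and $xe_x\in E(\G)$ would put $e_x$ in $x$'s component, forcing $ae_x,be_x\notin E(\G)$ and $\deg_\G(e_x)\le 1$, so $xe_x$ is a non-$\G$-edge (and $ae_x,be_x$ are the two $\G$-edges at $e_x$). Finally, for (5), suppose $a$ --- hence $a$ and $b$, by (3) --- is connected to $v_0$, so $a,b\in G_1$ while $G_2$ contains at least two v-vertices $y_1,y_2$, each disconnected from $a$ and $b$ and outside $\{v_0,v_1,v_2\}$. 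Applying (4) to each $y_i$ yields edges $f_i=\{a,b,y_i\}$ with $af_i,bf_i\in E(\G)$ and $y_if_i$ a non-$\G$-edge; combining them with an edge $g\supseteq\{y_1,y_2\}$ produces a forbidden cycle such as $v_0\,e_1\,a\,f_1\,y_1\,g\,y_2\,f_2\,b\,e_2\,v_0$ whenever $g$ is interchanged, and the remaining sub-case is disposed of exactly as in (3) by re-routing within $G_2$ or through $G_1$ (again using $n\ge 7$). Hence $a$ and $b$ are disconnected from $v_0$.
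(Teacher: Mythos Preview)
Your arguments for (1), (2), and (4) are correct and essentially identical to the paper's. The observations (a) and (b) you isolate are exactly the right engine, and the ``forbidden-shape'' cycle $v_0\,e_1\,a\,P\,b\,e_2\,v_0$ is the key construction throughout.

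For (3) and (5), however, your sketch has a genuine gap: the single-cycle strategy you propose can fail, and the paper does \emph{not} proceed this way. Concretely, in (3) with $a=v_2$ (so $a\in G_1$, $b\in G_2$), your cycle $v_0\,e_1\,a\,g_1\,x\,g_2\,b\,e_2\,v_0$ requires an edge $g_2\supseteq\{x,b\}$ that is interchanged at $x$ and $b$; but if $G_2$ has exactly the two v-vertices $b,x$, then the third vertex of any such $g_2$ lies in $G_1$, and observation (b) forces \emph{both} $xg_2,bg_2\in E(\G)$ --- no admissible $g_2$ exists. The same obstruction kills your cycle in (5) when $G_2$ has exactly two v-vertices $y_1,y_2$: every edge $g\supseteq\{y_1,y_2\}$ then has both $y_1g,y_2g$ as $\G$-edges. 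Your proposed re-routings (``through $G_1$'', ``use $n\ge7$'') are not fleshed out, and it is not clear they can be made to work uniformly.

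The paper sidesteps this in two different ways. For (3), it does \emph{not} look for a degree-reducing cycle in $\G$ directly. Instead it first applies an $\F$-interchanging $6$-cycle $C_1=a\,e_1\,b\,e_2\,v_2\,e_3\,a$ (which avoids $v_0$ entirely), switches to the resulting minimum family $\F'$, analyses which v-vertices are connected in $G_{\F'}$, and only then builds a second interchanging cycle $C_2=v_0\,e_1\,b\,e_4\,c\,e_5\,v_2\,e_2\,v_0$ of the forbidden shape in $G_{\F'}$. For (5), the paper uses only one extra vertex $c\in G_2$ and the length-$6$ cycle $C=v_0\,e_1\,a\,e_c\,c\,e_3\,v_0$; the case $v_0e_3\in E(\G)$ reduces the degree as you want, but the case $v_0e_3\notin E(\G)$ is handled by a connectivity argument: using the \emph{second} edge $e_d=abd$ (from a second vertex $d\in G_2$) to exhibit a cycle in $G_1\setminus v_0e_1$ through $ae_c$, one shows both $G_1\setminus E(C)$ and $G_2\setminus E(C)$ are connected, so by Lemma~\ref{lem:dimcycle} $C$ is $\F$-diminishing --- the contradiction comes from Corollary~\ref{lem:isolated}(3), not from degree reduction. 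You should incorporate one (or both) of these ideas: the two-step family-switch, or the diminishing-cycle connectivity check.
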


\begin{figure}[t]
\centerline{\includegraphics[scale=0.8]{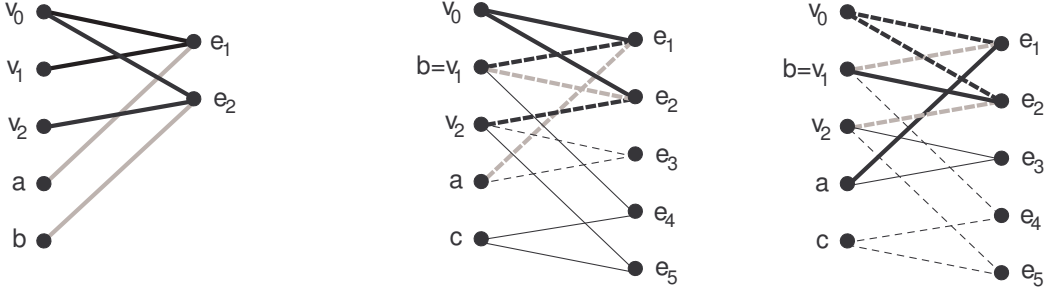}}
\caption{Relevant subgraphs of $G$ in Lemma~\ref{lem:order7setup}. The set-up (left); $\G$ and $C_1$ (centre), and $G_{\F'}=\G\Delta C_1$ and $C_2$ (right) in the proof of (3). Thick black (grey) edges are those in (not in) the subgraph corresponding to the Euler family; the status of thin edges is unknown. Dashed edges represent the interchanging cycle. Note that exactly one of the two thin edges incident with each of $e_3$, $e_4$, and $e_5$ is in $\G$ ($G_{\F'}$).}\label{fig:Lemma-1}
\end{figure}

\begin{proof}
See Figure~\ref{fig:Lemma-1} (left). Note that by Corollary~\ref{lem:isolated}(1), $\G$ has exactly two connected components, both non-trivial, and by Corollary~\ref{lem:isolated}(2) and the assumption on $\F$, there is no $\F$-interchanging cycle $C$ in $G$ such that $\deg_{\G\Delta C}(v_0) < \deg_{\G}(v_0)$.

{\bf (1)} If $a=b$, then $C=v_0e_1ae_2v_0$ is an $\F$-interchanging cycle with $\deg_{\G\Delta C}(v_0) < \deg_{\G}(v_0)$, a contradiction.

{\bf (2)} If at least one (and hence exactly one) of $ea$ and $eb$ is a non-$\G$-edge, then $C=v_0e_1aebe_2v_0$ is an $\F$-interchanging cycle with $\deg_{\G\Delta C}(v_0) < \deg_{\G}(v_0)$, a contradiction.
Therefore, both $ea$ and $eb$ are $\G$-edges, as claimed.

{\bf (3)} Suppose $a$ and $b$ are disconnected in $\G$.  Since $H$ is a covering 3-hypergraph, there exists an edge $e\in E(H)$ containing both $a$ and $b$.  By our supposition, one of $ea$ and $eb$ is necessarily a non-$\G$-edge, so (2) implies that $e=e_1$ or $e=e_2$.  Without loss of generality, assume $e=e_1$; hence $v_1=b$.

Observe that $v_0, b,$ and $v_2$ lie in the same connected component of $\G$, so $a$ and $v_2$ are disconnected in $\G$.  Let $e_3\in E(H)$ be an edge containing $a$ and $v_2$. Then exactly one of $v_2e_3$ and $ae_3$ is not in $\G$, and $C_1=ae_1be_2v_2e_3a$ is an $\F$-interchanging cycle; see Figure~\ref{fig:Lemma-1} (centre).

Define $\F'$ to be an Euler family corresponding to $\G\Delta C_1$, so $G_{\F'}=\G\Delta C_1$.  Since $H$ is not eulerian, Corollary~\ref{lem:isolated} implies that $G_{\F'}$ has exactly two connected components, both non-trivial. It is easy to verify --- see Figure~\ref{fig:Lemma-1} (right) --- that vertices $v_0$, $a$, and $b$ are mutually connected in $G_{\F'}$. Since every v-vertex is connected in $\G$ to either $a$ or $v_2$, if $v_0$ and $v_2$ are connected in $G_{\F'}$, then $G_{\F'}$ is connected, a contradiction. Thus $v_0$ and $v_2$ are disconnected in $G_{\F'}$.

By Corollary~\ref{lem:isolated}(1), vertex $a$ is not isolated in $\G$.  Let $c\neq a$ be any v-vertex connected to $a$ in $\G$.  We shall construct an $\F'$-interchanging cycle traversing $v_0$, $b$, $v_2$, and  $c$.

Let $e_4,e_5\in E(H)$ be such that $e_4$ contains $b$ and $c$, and $e_5$ contains $v_2$ and $c$.  As $e_4,e_5 \not\in \{e_1,e_2,e_3 \}$, e-vertices $e_4$ and $e_5$ have the same neighbours in $\G$ as in $G_{\F'}$. If $e_4=e_5$, then $be_4$ and $v_2e_4$ are $\G$-edges, and hence $G_{\F'}$-edges, a contradiction. Hence $e_4 \ne e_5$. Moreover, exactly one of  $be_4$ and $ce_4$, and exactly one of  $v_2e_5$ and $ce_5$, is a non-$\G$-edge.

It follows that $C_2=v_0e_1be_4ce_5v_2e_2v_0$ is an $\F'$-interchanging cycle  with $\deg_{G_{\F'}\Delta C_2}(v_0) < \deg_{\G}(v_0)$; see Figure~\ref{fig:Lemma-1} (right). Since $|\F'|=2$ and $\deg_{G_{\F'}}(v_0)=\deg_{\G}(v_0)$,  Corollary~\ref{lem:isolated}(2) yields a contradiction.

We conclude that $a$ and $b$ are connected in $\G$.

{\bf (4)} Let $x\in V(H)\sm \{v_0,v_1,v_2\}$ be a v-vertex disconnected in $\G$ from $a$ and $b$.  Since $H$ is a covering 3-hypergraph, there exist edges $e\in E(H)$ containing $a$ and $x$, and $e'\in E(H)$ containing $b$ and $x$.  If $e\neq e'$, then $C=v_0e_1aexe'be_2v_0$ is an $\F$-interchanging cycle with $\deg_{\G\Delta C}(v_0) < \deg_{\G}(v_0)$, a contradiction.  Hence $e=abx=e'$, and $ae$ and $be$ are $\G$-edges by (2), so $xe$ is a non-$\G$-edge.

\begin{figure}[t]
\centerline{\includegraphics[scale=0.8]{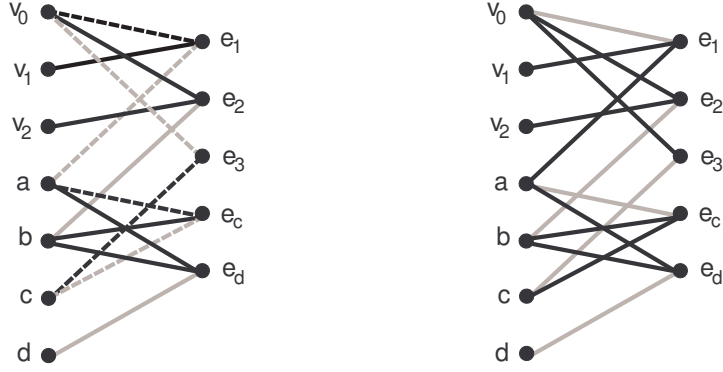}}
\caption{$\G$ and $C$ (left), and $G_{\F'}=\G\Delta C$ in the proof of Lemma~\ref{lem:order7setup}(5). Thick black (grey) edges are those in (not in) the subgraph corresponding to the Euler family, and dashed edges represent the $\F$-interchanging cycle $C$. }\label{fig:Lemma-2}
\end{figure}

{\bf (5)} Suppose that $a$ and $b$ lie in the same connected component of $\G$ as $v_0$. Call this component $G_1$, and let $G_2$ be the other connected component. By Corollary~\ref{lem:isolated}(1), component $G_2$ has at least two v-vertices, say $c$ and $d$.
By (4), there are edges $e_c=abc$ and $e_d=abd$ of $H$ such that $ce_c$ and $de_d$ are non-$\G$-edges.

Now, there exists an edge $e_3\in E(H)$ containing $v_0$ and $c$, and $e_3\not\in\{e_1,e_2\}$.  Let $C=v_0e_1ae_cce_3v_0$; see Figure~\ref{fig:Lemma-2} (left).  Then $C$ is an $\F$-interchanging cycle because $c$ and $v_0$ are disconnected in $\G$.  If $v_0e_3\in V(G_1)$, then there are two $\G$-edges of $C$ incident with $v_0$ and so $\deg_{\G\Delta C}(v_0) < \deg_{\G}(v_0)$, a contradiction.  Hence $v_0e_3$ is a non-$\G$-edge and  $e_3c \in E(G_2)$.
Since $G_2$ is an even graph and $G_2 \sm  E(C) = G_2 \sm  ce_3$, we have that $G_2\sm  E(C)$ is connected.

Since $G_1$ is an even graph, we have that $G_1\sm  v_0e_1$ is connected, and since $ae_c$ is an edge in a cycle of $G_1\sm  v_0e_1$,  we have that $(G_1\sm  v_0e_1)\sm  ae_c = G_1\sm  E(C)$ is connected; see Figure~\ref{fig:Lemma-2} (right).  Then,  by Lemma~\ref{lem:dimcycle}, $\G\Delta C$ has just one non-trivial connected component, contradicting the fact that $H$ is not eulerian.

We conclude that $a$ and $b$ are disconnected from $v_0$ in $\G$.
\end{proof}

With the setup afforded by Lemma~\ref{lem:order7setup}, we know a lot about five of the v-vertices of $\G$, namely $v_0,v_1,v_2,a,$ and $b$.  In Theorem~\ref{thm:order7}, we need only investigate the positions of two other vertices, and this will be enough to show that there must be an $\F$-interchanging cycle that contradicts our assumption that $H$ is not eulerian.\\

\newpage

\begin{thm}\label{thm:order7} Let $H$ be a covering 3-hypergraph of order $n\geq 7$.  Then $H$ is eulerian.
\end{thm}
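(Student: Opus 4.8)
The plan is to argue by contradiction: assume $H$ is a covering $3$-hypergraph of order $n \geq 7$ that is not eulerian, and derive a contradiction by exhibiting an $\F$-interchanging cycle that violates the minimality assumptions on $\F$. First I would invoke Theorem~\ref{cor:quasieuleriancovering} to obtain an Euler family, and then select, among all minimum Euler families (which by Corollary~\ref{lem:isolated}(1) have exactly two components, with $\G$ having no isolated vertices), one that minimizes $\deg_{\G}(v_0)$ for a fixed but arbitrary v-vertex $v_0$. If $\deg_{\G}(v_0) = 0$ this contradicts Corollary~\ref{lem:isolated}(1), so $\deg_{\G}(v_0) \geq 2$ and we are exactly in the situation of Lemma~\ref{lem:order7setup}. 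That lemma hands us five distinguished v-vertices $v_0, v_1, v_2, a, b$ with: $a \neq b$; $a$ and $b$ connected in $\G$ to each other but disconnected from $v_0$; $v_0, v_1, v_2$ in one component (call it $G_{v_0}$), and $a, b$ in the other component $G_{ab}$; plus the structural constraint from part (4) that any ``escape'' vertex $x$ disconnected from $a$ and $b$ forces a specific edge $abx$ of $H$ with $xe_x$ a non-$\G$-edge.

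The core of the argument is then a case analysis on the locations of two further v-vertices. Since $n \geq 7$, besides $v_0, v_1, v_2, a, b$ there are at least two more v-vertices, say $x$ and $y$. I would first dispose of the case where some such vertex, say $x$, lies in $G_{ab}$ (i.e.\ is connected to $a$ and $b$ in $\G$): then $x \notin \{v_0, v_1, v_2\}$ lies in the ``$ab$-component'', and I would try to build an $\F$-interchanging cycle through $v_0$, $a$ (or $b$), $x$, $v_1$ (or $v_2$) using covering edges $\{v_0, x, \cdot\}$ etc., arranged so that the cycle uses at most one $\G$-edge per component of $\G$ or reduces $\deg_{\G}(v_0)$; Corollary~\ref{cor:simpledimcycle} or Corollary~\ref{cor:dimcycle}(2) then gives eulerianness, a contradiction. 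The remaining case is that \emph{both} $x$ and $y$ are disconnected from $a$ and $b$ in $\G$ — hence both lie in $G_{v_0}$ — and then Lemma~\ref{lem:order7setup}(4) forces edges $e_x = abx$ and $e_y = aby$ in $H$ with $xe_x, ye_y$ non-$\G$-edges. Now I would look for a cycle such as $C = v_0 e_1 a e_x x \cdots$ or a cycle weaving $x$, $y$, $a$, $b$, $v_0$ together through these forced edges and an edge $\{x, y, \cdot\}$ or $\{v_0, x, \cdot\}$; since $e_x$ and $e_y$ each contribute the $\G$-edges $ae_x, be_x$ (and likewise for $e_y$) while $xe_x, ye_y$ are non-$\G$-edges, one can keep the cycle $\F$-interchanging at each e-vertex, and arrange either to decrease $\deg_{\G}(v_0)$ or to merge the two components, contradicting Corollary~\ref{lem:isolated} again.

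The main obstacle I anticipate is the bookkeeping of edge statuses along the candidate cycles: every e-vertex on an $\F$-interchanging cycle must meet exactly one $\G$-edge of the cycle, and since a covering hypergraph may have many edges containing a given pair, one does not get to choose which of the two incident edges at each e-vertex is the $\G$-edge — so the cycle must be designed to be robust to this ambiguity (this is exactly the spirit of Lemma~\ref{lem:order7setup}(2), which guarantees that certain doubly-incident e-vertices have \emph{both} edges in $\G$). A secondary subtlety is ensuring the constructed cycle is a genuine cycle (no repeated v-vertex or e-vertex) — in particular that the edges supplied by the covering property are genuinely new e-vertices distinct from $e_1, e_2, e_x, e_y$ — which is where the hypothesis $n \geq 7$ does the real work, guaranteeing enough room to choose distinct vertices and hence distinct edges. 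Once a suitable $C$ is found in every case, Corollary~\ref{lem:isolated}(2)--(3) or Corollary~\ref{cor:dimcycle}(2) closes the argument: either $\G \Delta C$ has one non-trivial component (so $H$ is eulerian), or it is a minimum Euler family with $\deg_{G_{\F'}}(v_0) < \deg_{\G}(v_0)$, contradicting our choice of $\F$.
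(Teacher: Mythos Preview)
Your overall framework matches the paper's exactly: suppose $H$ is not eulerian, fix $v_0$, take a minimum Euler family $\F$ minimizing $\deg_{\G}(v_0)$, invoke Corollary~\ref{lem:isolated} and Lemma~\ref{lem:order7setup}, and then do a case analysis on where the additional v-vertices live. That much is correct and is the right scaffolding.

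However, there is a genuine gap in the plan, and it is not merely bookkeeping. You assume that in every case a \emph{single} well-chosen $\F$-interchanging cycle will either diminish the family or drop $\deg_{\G}(v_0)$. The paper's proof shows this is not achievable directly. In two of the three cases (the paper splits into: both components have extra v-vertices; only $G_1$ does; only $G_2$ does --- your two-way split conflates the first and third), the decisive step is a \emph{two-stage} argument: one first applies an $\F$-interchanging cycle $C$ that does \emph{not} itself give a contradiction, but produces a new minimum Euler family $\F'$ with $\deg_{G_{\F'}}(v_0)=\deg_{\G}(v_0)$; one then re-applies Lemma~\ref{lem:order7setup} (or reduces to an already-handled case) to $\F'$ to locate a second interchanging cycle $C'$ that finally yields the contradiction. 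This bootstrap is the technical heart of the proof and is absent from your sketch.

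Relatedly, your treatment of the case ``some extra vertex $x$ lies in $G_{ab}$'' is too optimistic. The paper's Case~1 shows that one cannot simply thread a cycle through $v_0$, $a$, $x$, and $v_1$: the covering edge containing $a$ and the extra vertex $d\in G_2$ may have both its relevant incidences in $\G$, and chasing the possibilities forces very specific edges ($e_3=adv_2$, $e_4=bdv_1$) before any usable cycle emerges. At that point one is still not done --- the cycle one applies yields an $\F'$ to which the entire setup is re-applied with roles permuted, and only then does a final $\F'$-interchanging cycle close the argument. Your anticipated ``main obstacle'' (edge-status bookkeeping) is real, but the missing idea is this iterated use of Lemma~\ref{lem:order7setup} across successive Euler families.
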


\begin{proof} Suppose $H$ is not eulerian.   Let $G$ be the incidence graph of $H$, and fix some v-vertex $v_0$.  By Theorem~\ref{cor:quasieuleriancovering}, $H$ is quasi-eulerian, so let $\F$ be a minimum Euler family of $H$ with the property that $\deg_{\G}(v_0)$ is minimum over all minimum Euler families of $H$.  By Corollary~\ref{lem:isolated}, we know that $|\F|=2$ and that $v_0$ is not isolated in $\G$, so $\deg_{\G}(v_0)\geq 2$. Moreover, by the same corollary, $G$ has no $\F$-diminishing cycle, and no $\F$-interchanging cycle $C$ such that $\deg_{\G\Delta C}(v_0) < \deg_{\G}(v_0)$.

We have distinct e-vertices $e_1,e_2\in V(G)$ such that $v_0e_1, v_0e_2\in E(\G).$  Let $e_1=v_0v_1a$ and  $e_2=v_0v_2b$ for some v-vertices $v_1$, $a$, $v_2$, and $b$.
Without loss of generality, let $v_1e_1$ and $v_2e_2$ be $\G$-edges, and $ae_1$ and $be_2$ be non-$\G$-edges.
Now $H$ satisfies the assumptions of Lemma~\ref{lem:order7setup} with exactly the same set-up.  We have that $v_0$, $v_1,$ and $v_2$ lie in one connected component, $G_1$, of $\G$, and $a$ and $b$ lie in the other connected component, $G_2$, of $\G$.  Note that $a\neq b$, but it is possible that $v_1=v_2$.

Since $|V(H)|\geq 7$, we must have at least two v-vertices in $\G$ besides $v_0,v_1,v_2,a$, and $b$.  We shall complete the proof by splitting into cases depending on where two of these additional v-vertices lie.

{\sc Case 1: there exist v-vertices $c\in V(G_1)\sm  \{v_0,v_1,v_2\}$ and $d\in V(G_2)\sm  \{a,b\}$.}

Since $c$ is disconnected in $\G$ from $a$ and $b$, by Lemma~\ref{lem:order7setup}(4), there exists an edge $e_c=abc$.

Let $e_3\in E(H)$ be an edge containing $a$ and $d$, and suppose $ae_3$ or $de_3$ is a non-$\G$-edge.  There exists an edge $f\in E(H)$ containing $c$ and $d$, and $f\neq e_3$ because $e_3$ must contain three v-vertices of $G_2$.  Hence $C=v_0e_1ae_3dfce_cbe_2v_0$ is an $\F$-interchanging cycle with $\deg_{\G\Delta C}(v_0) < \deg_{\G}(v_0)$, a contradiction.  Hence $ae_3$ and $de_3$ are $\G$-edges.

Similarly, let $e_4\in E(H)$ be an edge containing $b$ and $d$, and conclude that $be_4$ and $de_4$ are $\G$-edges.  (Note that this also implies that $e_3\neq e_4$.)

Let $x$ be the v-vertex such that $e_3=adx$.  If $x=b$, then $C=v_0e_1ae_3be_2v_0$ is an $\F$-interchanging cycle with $\deg_{\G\Delta C}(v_0) < \deg_{\G}(v_0)$, a contradiction.  Hence $x\neq b$.

If $x\in V(G_2)$ and $f$ is an edge containing $x$ and $c$, then $C=v_0e_1ae_3xfce_cbe_2v_0$ is an $\F$-interchanging cycle with $\deg_{\G\Delta C}(v_0) < \deg_{\G}(v_0)$, a contradiction.  Hence $x\in V(G_1)$.

If $x\in \{v_0,v_1\}$, then $C=xe_1ae_3x$ is an $\F$-interchanging cycle with just one $\G$-edge in each of $G_1$ and $G_2$.  By Corollary~\ref{cor:simpledimcycle}, $C$ is an $\F$-diminishing cycle, a contradiction. Thus $x\not\in \{v_0,v_1\}$.

If $x\in V(G_1)\sm \{v_0,v_1,v_2\}$, then by Lemma~\ref{lem:order7setup}(4), there exists an edge $e_x=abx$, and $xe_x$ is a non-$\G$-edge.  Then $C=v_0e_1ae_3xe_xbe_2v_0$ is an $\F$-interchanging cycle with $\deg_{\G\Delta C}(v_0) < \deg_{\G}(v_0)$, a contradiction.

We thus conclude that $x=v_2$ (and $v_1\neq v_2$), so $e_3=adv_2$.  By an analogous argument (swapping the roles of $a$ and $b$, of $v_1$ and $v_2$, and of $e_1$ and $e_2$), we have that $e_4=bdv_1$.

\begin{figure}[t]
\centerline{\includegraphics[scale=0.8]{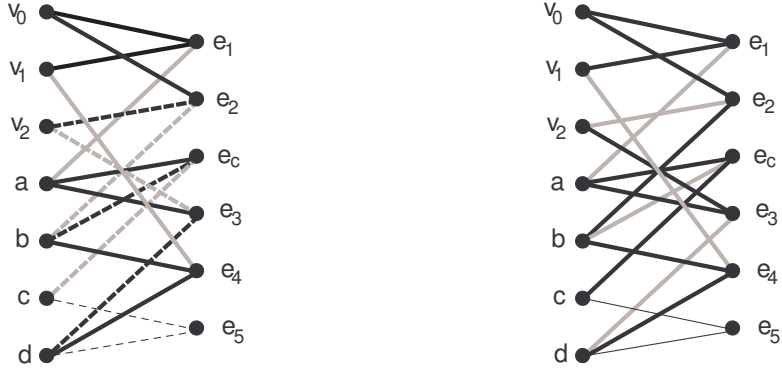}}
\caption{$\G$ (left) and $G_{\F'}=\G\Delta C$ (right) in Case 1. Thick black (grey) edges are those in (not in) the subgraph corresponding to the Euler family, and dashed edges represent the $\F$-interchanging cycle $C$. Note that exactly one of the two thin edges incident with $e_5$ is in $\G$ ($G_{\F'}$).}\label{fig:Case1}
\end{figure}

Let $e_5$ be an edge containing $c$ and $d$, and note that $e_5\not\in \{e_2,e_c,e_3\}$.  Then $C=be_cce_5de_3v_2e_2b$. is an $\F$-interchanging cycle; see Figure~\ref{fig:Case1} (left). Let $\F'$ be an Euler family corresponding to $G_{\F'}=\G\Delta C$.  By Lemma~\ref{lem:isolated}, $G_{\F'}$ has exactly two connected components, both non-trivial, and it can be verified easily --- see Figure~\ref{fig:Case1} (right) --- that v-vertices $v_0$, $v_1$, $b$, and $d$ are mutually connected in $G_{\F'}$, as are $v_2$, $a$, and $c$. Since every vertex is connected in $\G$, and hence in $G_{\F'}$, to one of these seven v-vertices, we conclude that
$v_0$, $v_1$, $b$, and $d$ lie in one connected component of $G_{\F'}$, and $v_2,a,$ and $c$ lie in the other.

Observe that, since $G_{\F'}$ satisfies the initial assumptions made of $\G$, we may apply Lemma~\ref{lem:order7setup} to $G_{\F'}$, with the roles of $v_2$ and $b$ swapped.  In addition, what we know so far on Case 1 applies to $G_{\F'}$ as well, with the roles of $v_2$ and $b$, of $c$ and $d$, and of $e_c$ and $e_3$ swapped.

In particular, we deduce that there exists an e-vertex $e_6=v_1v_2c$ that plays the same role in $G_{\F'}$ as $e_4=v_1bd$ does in $\G$.  Let $C'=v_0e_1ae_cbe_4v_1e_6v_2e_2v_0$, and observe that $C'$ is an $\F'$-interchanging cycle with $\deg_{G_{\F'}\Delta C'}(v_0) < \deg_{G_{\F'}}(v_0)$, a contradiction.

{\sc Case 2: $a$ and $b$ are the only v-vertices in $G_2$.}

Let $X=V(H)\sm \{v_0,v_1,v_2,a,b\}$, so $|X|\geq 2$.

Suppose first that $v_1 \ne v_2$. Let $e$ be an edge of $H$ containing $v_2$ and $a$.

If $e=v_0v_2a$, then $C=v_0e_1aev_0$ is an $\F$-interchanging cycle. If $v_0e \not\in E(G_1)$, then $C$ is an $\F$-diminishing cycle by Corollary~\ref{cor:simpledimcycle}. Otherwise, $\deg_{G_{\F}\Delta C}(v_0) < \deg_{G_{\F}}(v_0)$, so we have  a contradiction in both cases.

If $e=v_2ab$, then $C=v_2e_2bev_2$ is an $\F$-diminishing cycle by Corollary~\ref{cor:simpledimcycle}, a contradiction.

If $e=v_2ax$, for some $x \in X$, then by Lemma~\ref{lem:order7setup}(4), we have $b \in e$,  a contradiction.

Hence $e=v_1v_2a$, and by symmetry, there is an edge $e'=v_1v_2b$. Thus $v_1ev_2e'v_1$ is a cycle in $G_1$, and $C=v_1e_1aev_1$ is an $\F$-interchanging cycle. Since $G_1 \sm E(C)=(G_1 \sm v_1e_1) \sm v_1e$ and $v_1e$ lies in a cycle of $G_1 \sm v_1e_1$, which is connected,  $\G \Delta C$ is connected by Lemma~\ref{lem:dimcycle} --- a contradiction.

Hence $v_1=v_2$, and $v_0e_1v_1e_2v_0$ is a cycle in $G_1$. Fix some $x \in X$. By Lemma~\ref{lem:order7setup}(4), there is an edge $e_x=abx$ in $H$, and $xe_x$ is a non-$\G$-edge. Let $e$ be an edge of $H$ containing $v_1$ and $x$. By Lemma~\ref{lem:order7setup}(4), we know $a,b \not\in e$.

Suppose $e=v_1xy$ for some $y \in X \sm \{x\}$.
By Lemma~\ref{lem:order7setup}(4), there is an edge $e_y=aby$ in $H$, and $ye_y$ is a non-$\G$-edge.

If $xe, ye \in E(G_1)$, then $C=v_1e_1ae_xxev_1$ is an $\F$-interchanging cycle. Since $v_1e_1$ lies in a cycle of $G_1 \sm xe$, we have that  $G_1\sm E(C)=(G_1 \sm xe) \sm v_1e_1$ is connected.
Since $G_2 \sm E(C)$ is connected as well, $\G \Delta C$ is connected by Lemma~\ref{lem:dimcycle} --- a contradiction.

Hence $v_1e \in E(G_1)$. If $xe \not\in E(G_1)$, then let $C=v_1e_2be_xxev_1$; otherwise, let $C=v_1e_2be_yyev_1$. In either case, $C$ is an $\F$-interchanging cycle, and it can be seen as above that both $G_1\sm E(C)$ and $G_2\sm E(C)$ are connected, a contradiction.

We conclude that $e=v_0v_1x$. If $v_1e, v_1e \in E(G_1)$, then $C=v_0e_1ae_xxev_0$ is an $\F$-interchanging cycle with $\deg_{G_{\F}\Delta C}(v_0) < \deg_{G_{\F}}(v_0)$ ---  a contradiction.

Hence $xe \in E(G_1)$. If $v_0e \not\in E(G_1)$, then let $C=v_0exe_xae_1v_0$; otherwise, let $C=v_1exe_xae_1v_1$. In either case, $C$ is an $\F$-interchanging cycle, and it can be seen as above that both $G_1\sm E(C)$ and $G_2\sm E(C)$ are connected, a contradiction.

{\sc Case 3: $v_0, v_1,$ and $v_2$ are the only v-vertices of $G_1$.}

Let $e_x=abx$ be an edge of $H$ containing $a$, $b$, and some vertex $x$.  Since $e_x \not\in \{ e_1, e_2 \}$, by Lemma~\ref{lem:order7setup}(2), we have that $ae_x$ and $be_x$ are $\G$-edges, and hence $xe_x$ is a non-$\G$-edge.

If $x\in\{v_0,v_1\}$, then $C=xe_1ae_xx$ is an $\F$-diminishing cycle by Corollory~\ref{cor:simpledimcycle}, a contradiction. Similarly, if $x=v_2$, then $C=v_2e_2be_xv_2$ is an $\F$-diminishing cycle.

Hence $x\in V(G_2)$.
Since, by Corollary~\ref{cor:noncut}, the graph $G_1$ has at least two v-vertices that are not cut vertices, we may assume there exists $y\in\{v_0,v_1\}$ that is not a cut vertex of $G_1$.  Let $e_3=xyz$ for some $z \in V(H)$.
Then $C=ye_1ae_xxe_3y$ is an $\F$-interchanging cycle.  Let $\F'$ be an Euler family corresponding to $G_{\F'}=\G\Delta C$.

\begin{figure}[t]
\centerline{\includegraphics[scale=0.8]{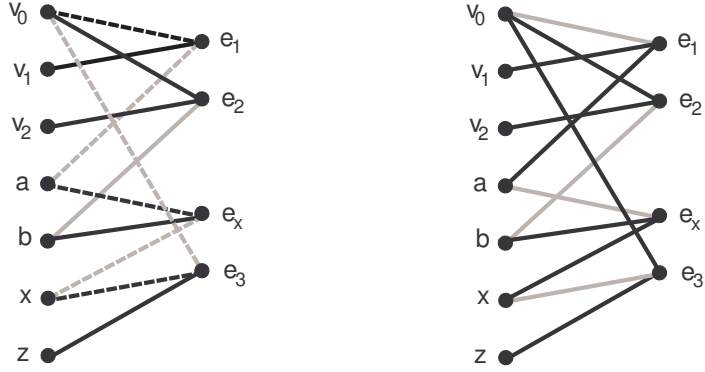}}
\caption{$\G$ (left) and $G_{\F'}=\G\Delta C$ (right) in Case 3, for $y=v_0$. Thick black (grey) edges are those in (not in) the subgraph corresponding to the Euler family, and dashed edges represent the $\F$-interchanging cycle $C$. }\label{fig:Case3a}
\end{figure}

Suppose $y=v_0$.  If $v_0e_3$ is a $\G$-edge, then $\deg_{G_{\F'}}(v_0) < \deg_{\G}(v_0)$, a contradiction.  Hence $v_0e_3$ is non-$\G$-edge --- see Figure~\ref{fig:Case3a} (left) --- and  $G_1\sm  E(C)=G_1 \sm v_0e_1$ is connected.
By Corollary~\ref{lem:isolated}(2),  $\F'$ is a minimum Euler family that minimizes $\deg_{G_{\F'}}(v_0) = \deg_{\G}(v_0)$, so it satisfies our assumptions on $\F$. Moreover, the set-up of Lemma~\ref{lem:order7setup} applies to $\F'$, with $v_0,v_1,v_2,a,b,e_1,e_2$ replaced by $v_0,z,v_2,x,b,e_3,e_2$, respectively; see Figure~\ref{fig:Case3a} (right). Since the connected component of $G_{\F'}$ containing $v_0$ also contains vertices $v_1$ and $a$, we can see that $G_{\F'}$ falls under Case 1 or Case 2 of this proof, both of which lead to a contradiction.

\begin{figure}[t]
\centerline{\includegraphics[scale=0.8]{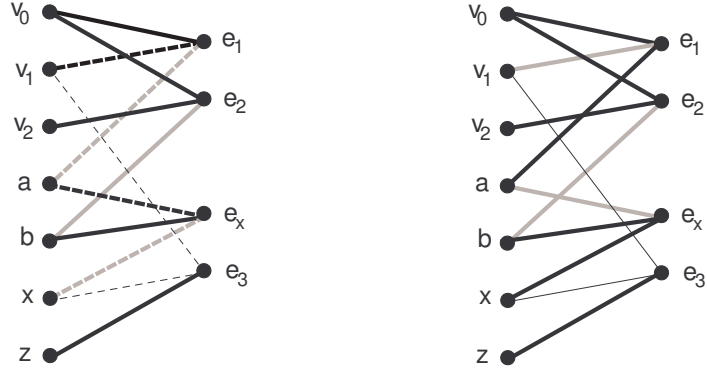}}
\caption{$\G$ (left) and $G_{\F'}=\G\Delta C$ (right) in Case 3, for $y=v_1$. Thick black (grey) edges are those in (not in) the subgraph corresponding to the Euler family, and dashed edges represent the $\F$-interchanging cycle $C$. Note that exactly one of the two thin edges incident with $e_3$ is in $\G$ ($G_{\F'}$).}\label{fig:Case3b}
\end{figure}

Hence $y=v_1$; see Figure~\ref{fig:Case3b} (left). Suppose first that $v_1$ and $b$ are connected in $G_{\F'}$.  Since $v_1$ is not a cut vertex of $G_1$,  either $G_1\sm  E(C)$ is connected, or $(G_1\sm  E(C)) - v_1$ is connected and $v_1$ is isolated in $G_1\sm  E(C)$.  If $v_1$ is isolated in $G_1\sm  E(C)$, then  it is isolated in $G_{\F'}$ as well, a contradiction.  Hence $G_1\sm  E(C)$ is connected. It is then easy to verify that  $v_0$, $v_1$, $v_2$, $a$, $b$, and $x$ lie in the same connected component of $G_{\F'}$; see Figure~\ref{fig:Case3b} (right).  Since all other v-vertices are connected to one of these in $G_{\F'}$, we see that $G_{\F'}$ is connected, a contradiction.

Hence $v_1$ and $b$ are disconnected in $G_{\F'}$.  Apply Lemma~\ref{lem:order7setup}(2) to $G_{\F'}$ with the roles of $v_1$ and $a$ swapped. Thus $v_1$ and $b$ are connected in $G_{\F'}$, a contradiction.

Having reached a contradiction in all cases, we conclude that $H$ is eulerian.
\end{proof}

\subsection{Case $3\leq n\leq 6$}\label{sec:order6}

In this section, we prove Theorem~\ref{thm:covering} for the orders that were missed by Theorem~\ref{thm:order7}. \\

\begin{lem}\label{lem:coveringorder3-5} Let $H$ be a covering 3-hypergraph of order $n$, for $3 \le n \le 6$, and with at least two edges.  Then $H$ is eulerian.
\end{lem}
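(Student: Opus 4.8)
The plan is to reduce to simple hypergraphs, handle $n=3$ by hand, and then argue by contradiction using the machinery of Sections~\ref{sec:big-tools} and \ref{sec:technicallemmas}.

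For the reductions: if $n=3$, then $H$ is a multiset of at least two copies of the unique triple $\{1,2,3\}$, and an Euler tour is written down directly. If $n\in\{4,5,6\}$, the underlying simple hypergraph $H_0$ of $H$ has at least two edges (a single triple covers only $3<\binom n2$ pairs) and is again a covering $3$-hypergraph of order $n$; moreover, given any Euler tour traversing an edge $e=\{x,y,z\}$ as $\cdots x\,e\,y\cdots$, one absorbs an extra copy $e'$ of $e$ by rewriting this stretch as $\cdots x\,e\,z\,e'\,y\cdots$. Iterating, an Euler tour of $H_0$ yields one of $H$, so it suffices to prove the lemma for simple $H$ of order $n\in\{4,5,6\}$. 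Assume, for contradiction, that such an $H$ is not eulerian. By Theorem~\ref{cor:quasieuleriancovering} it is quasi-eulerian; fix a minimum Euler family $\F$ with incidence graph $G$ and corresponding subgraph $\G$. By Corollaries~\ref{cor:3comps} and \ref{lem:isolated}, $\G$ has exactly two connected components $G_1,G_2$, both non-trivial, $\G$ has no isolated vertex, and $G$ has no $\F$-diminishing cycle. Let $V_i$ be the set of v-vertices of $G_i$ and $s=|V_1|\le|V_2|=t$, so $s+t=n$; since each e-vertex has $\G$-degree $2$, and hence two distinct v-neighbours in $\G$, each $V_i$ has at least two vertices, so $s\ge 2$.

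The crucial step is to show that no \emph{crossing pair} --- a pair $\{p,q\}$ with $p\in V_1$, $q\in V_2$ --- lies in two distinct edges of $H$. Indeed, for any edge containing such a pair, both $\G$-edges at that e-vertex lie on its ``majority side'' (a $\G$-edge to the ``minority'' vertex would join $V_1$ to $V_2$ in $\G$), so exactly one of the two incidences at $p$ and at $q$ is a $\G$-edge; hence if $\{p,q\}$ lay in distinct edges $e,e'$, then $C=p\,e\,q\,e'\,p$ would be an $\F$-interchanging $4$-cycle of $G$, and a short check of the possible locations of its $\G$-edges shows that in $\G\Delta C$ one of $e,e'$ becomes a bridge between $V_1$ and $V_2$, so $C$ is $\F$-diminishing --- contradicting Corollary~\ref{lem:isolated}(3). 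Combined with the covering property, every crossing pair thus lies in exactly one edge; since any edge meeting both $V_1$ and $V_2$ contains exactly two crossing pairs, the $st$ crossing pairs split into pairs, so $st$ is even. In particular the split $\{s,t\}=\{3,3\}$ is impossible for $n=6$, so $s=2$ in every remaining case; write $V_1=\{p,p'\}$.

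To finish, observe that every edge $\{p,p',z\}$ of $H$ is an e-vertex of $G_1$ with $\G$-edges at $p$ and $p'$, so $G_1$ is a complete bipartite graph $K_{2,2m}$ (v-degrees in $\G$ being even) and $H$ has exactly these $2m$ edges $e_i=\{p,p',z_i\}$ with the $z_i\in V_2$ distinct; hence $2m\le t$. For $n=4$ and $n=5$ we get $m=1$, and a short case check on the vertices of $V_2$ outside $\{z_1,z_2\}$ shows that the covering property fails (for $n=4$, $G_2$ would contain no e-vertex at all), a contradiction. For $n=6$ we get $t=4$ and $m\in\{1,2\}$; using the structure just forced --- $H$ restricted to $V_2$ must be a covering $3$-hypergraph on four vertices when $m=2$, while for $m=1$ the only crossing edges besides $e_1,e_2$ are $\{p,z_3,z_4\}$ and $\{p',z_3,z_4\}$ --- one picks a within-$V_2$ edge $g$, designates its ``odd'' vertex (the one incident to $g$ by a non-$\G$-edge), and builds an $\F$-interchanging cycle of the form $C=p\,e_a\,z_a\,g\,z_b\,e_b\,p$, routing the return through $\{p,z_3,z_4\}$ when needed, for which $\G\Delta C$ turns some e-vertex into a bridge between $V_1$ and $V_2$; hence $C$ is $\F$-diminishing, the final contradiction. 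The main obstacle is exactly this last step for $n=6$: the cycle built there has two of its $\G$-edges inside the single component $G_1$, so Corollaries~\ref{cor:simpledimcycle} and \ref{cor:dimcycle} do not apply, and one must verify directly --- tracking precisely which incidences are $\G$-edges --- that $\G\Delta C$ loses a non-trivial component; moreover, the choice of $g$ and of the return path must be adapted to the sub-case in which the natural routing through $g$ would traverse two $\G$-edges and thus fail the $\F$-interchanging condition, forcing one to use a different endpoint of $g$ together with a detour through $\{p,z_3,z_4\}$.
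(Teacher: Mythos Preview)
Your reduction to simple $H$ and the parity idea (each crossing edge covers exactly two crossing pairs, so $st$ must be even) are nice, and they do cleanly rule out the $3{+}3$ split once the ``crucial step'' is established. The problem is that this crucial step --- that the $4$-cycle $C=p\,e\,q\,e'\,p$ is always $\F$-diminishing --- is not true as stated, and your ``short check'' does not cover the bad case. When both $e$ and $e'$ lie in $G_2$ (i.e.\ both have two vertices in $V_2$), the two $\G$-edges of $C$ are $qe$ and $qe'$, and $\G\Delta C$ equals $G_1\cup\big(G_2\sm\{qe,qe'\}\big)+\{pe,pe'\}$. The new edges $pe,pe'$ only attach $G_1$ to the component of $G_2\sm\{qe,qe'\}$ containing $r$ and $r'$ (the other $\G$-neighbours of $e,e'$); if $G_2\sm\{qe,qe'\}$ has a second non-trivial piece containing $q$, that piece survives. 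For instance, with $V_1=\{p,p'\}$, $V_2=\{q,r,r',w\}$, take the e-vertices of $G_2$ to be $e=\{p,q,r\}$, $e'=\{p,q,r'\}$, $h=\{q,r,w\}$, $h'=\{q,r',w\}$ (both with $\G$-edges at $q,w$), and $j=\{p',r,r'\}$; then $G_2\sm\{qe,qe'\}$ splits into $\{q,h,w,h'\}$ and $\{e,r,j,r',e'\}$, and $\G\Delta C$ still has two non-trivial components. So $C$ is \emph{not} $\F$-diminishing here.

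Because of this, your claim that every crossing pair lies in exactly one edge is unproved precisely in the case $n=6$, $s=2$, $t=4$ --- and that is exactly where you use it most heavily (to force the crossing edges to be $\{p,z_3,z_4\}$ and $\{p',z_3,z_4\}$ when $m=1$, and to control the structure when $m=2$). The final paragraph, already sketchy in its routing of the $6$-cycle and the ``adapted'' detour through $\{p,z_3,z_4\}$, therefore rests on an unestablished structural fact. The paper avoids this trap by not asserting a global crossing-pair uniqueness; instead it handles $s=3$ with a $4$-cycle but supplements the ``both $\G$-edges in one component'' sub-case with a cut-vertex argument (Corollary~\ref{cor:noncut} and Lemma~\ref{lem:dimcycle}), and treats $s=2$ by an entirely separate, explicit case analysis on the e-vertices of $G_1$ and a carefully chosen $6$-cycle. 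To repair your approach you would need either a genuine proof that the $4$-cycle is $\F$-diminishing under the minimality hypothesis on $\F$ (not just that ``one of $e,e'$ becomes a bridge''), or a direct treatment of the $s=2$, $t=4$ case that does not rely on crossing-pair uniqueness.
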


\begin{proof} If $n=3$, then  $H$ is a triple system with at least two edges, and hence eulerian by \cite{SW}. Hence assume $n\in \{ 4,5,6 \}$.

Let $G$ be the incidence graph of $H$.  By Theorem~\ref{cor:quasieuleriancovering}, $H$ is quasi-eulerian, so let $\F$ be a minimum Euler family for $H$, with the corresponding subgraph $\G$ of $G$.  Suppose $H$ is not eulerian; then Corollary~\ref{lem:isolated} implies that $\G$ has exactly two connected components, both non-trivial, and admits no $\F$-diminishing cycle. Let $G_1$ be the connected component with fewer v-vertices.

{\sc Case 1: $G_1$ has three v-vertices.}

Hence $n=6$. Let $U_1=\{a,b,c\}$ and $U_2=\{u,v,w \}$ be the sets of  v-vertices of $G_1$ and $G_2$, respectively.   Since each of the nine pairs $\{x_1,x_2 \}$, for $x_1\in U_1$ and $x_2\in U_2$ is contained in some edge of $H$, and every edge of $H$ contains an even number of such pairs, at least one pair must be contained in two edges.

Without loss of generality, assume that $a,u\in e_1 \cap e_2$ for distinct edges $e_1,e_2\in E(H)$.  Then $C=ae_1ue_2a$ is an $\F$-interchanging cycle of $G$. If $e_1 \in V(G_1)$ and $e_2 \in V(G_2)$, then $C$ satisfies the assumptions of Corollary~\ref{cor:simpledimcycle}, and so it is an $\F$-diminishing cycle, a contradiction.

Hence, without loss of generality, $e_1,e_2 \in V(G_1)$. It follows that $ue_1$ and $ue_2$ are non-$\G$-edges, and $G_2\sm E(C)=G_2$ is connected.

If $a$ is not a cut vertex of $G_1$, then $G_1 \sm E(C)$ is either connected, or else has a single non-trivial connected component and an isolated vertex, $a$. In either case, by Lemma~\ref{lem:dimcycle},  $\G \Delta C$ has a single non-trivial connected component, a contradiction.

Hence $a$ is a cut vertex of $G_1$, and by Corollary~\ref{cor:noncut}, neither $b$ nor $c$ is. If $e_1$ and $e_2$ both contain $b$ or both contain $c$, then a contradiction is obtained as in the previous paragraph, by replacing $a$ in $C$ with $b$ or $c$, respectively. Since $e_1, e_2 \in V(G_1)$, we may thus assume $b \in e_1$ and $c \in e_2$, so that $be_1ae_2c$ is a path in $G_1$. Since $a$ is a cut vertex, by Corollary~\ref{cor:noncut}, there is no cycle in $G_1$ containing all of $a$, $b$, and $c$. Therefore, as $G_1$ is an even graph, it has e-vertices $e_3$ and $e_4$ such that $ae_1be_3a$ and $ae_2ce_4a$ are cycles in $G_1$. But then $G_1 \sm E(C)$ is connected, yielding a contradiction as above.

{\sc Case 2: $G_1$ has exactly two v-vertices.}

Let $m$ be the number of v-vertices in $G_2$, and denote the v-vertices of $G_1$ by $a$ and $b$.  Since $G_1$ is an even graph, it has an even number of e-vertices, each of the form $e=abx$ for some v-vertex $x$ of $G_2$.

If $e_1$ and $e_2$ are two e-vertices of $G_1$ such that $e_1=abx=e_2$, then $C=ae_1xe_2a$ is an $\F$-interchanging cycle. Moreover, it is not difficult to verify that $\G\Delta C$ has just one non-trivial connected component,  a contradiction.

It follows that the number of e-vertices of $G_1$ is no larger than $m$, the number of v-vertices in $G_2$.

Let $e_1=abu$ and $e_2=abv$, for distinct v-vertices $u$ and $v$ of $G_2$, be two e-vertices of $G_1$. If $G_2$ has an e-vertex $e$ containing $a$ and $u$, then $C=ae_1uea$ is an $\F$-interchanging cycle with exactly one $\G$-edge in each connected component of $\G$, so by Corollary~\ref{cor:simpledimcycle}, it is an $\F$-diminishing cycle, a contradiction. A similar contradiction is obtained if $G_2$ has an e-vertex $e$ containing  one of $a$ and $b$, and one of $u$ and $v$.
If $m=2$, then $e$ can be taken to be an edge containing $u$ and $v$, and if $m=3$, then $e$ can be taken to be an edge containing $a$ and $w$, the third v-vertex in $G_2$.

Hence we may assume that $m=4$, and $G_2$ has no e-vertex containing one of $a$ and $b$, and one of $u$ and $v$.
Label the remaining v-vertices of $G_2$ by $w$ and $z$, and observe that $G_1$ has either exactly two or exactly four e-vertices.

Suppose first that $G_1$ has exactly four e-vertices; namely e-vertices $e_3=abw$ and $e_4=abz$, in addition to $e_1=abu$ and $e_2=abv$. Let $e_5$ be an edge of $H$ containing $u$ and $v$. Without loss of generality, $e_5=uvw$, and $ue_5$ and $we_5$ are $\G$-edges. Then $C=ae_1ue_5ve_2a$ is an $\F$-interchanging cycle, and it is not difficult to verify that $G_1\sm E(C)$ and $G_2\sm E(C)$ are connected. Hence by Lemma~\ref{lem:dimcycle}, $C$ is an $\F$-diminishing cycle, a contradiction.

Hence $G_1$ has exactly two e-vertices. Since $e_1$ and $e_2$ are the only edges of $H$ containing one of $a$ and $b$, and one of $u$ and $v$, any edge containing $a$ and $w$, or $b$ and $w$, must also contain $z$. Hence we have edges $e_3=awz$ and $e_4=bwz$, and $we_3, we_4, ze_3, ze_4 \in E(G_2)$. Let $e_5$ be an edge containing $u$ and $v$. Without loss of generality, $e_5=uvw$ and $ue_5 \in E(G_2)$.

If $we_5 \in E(G_2)$, let $C=be_2ve_5we_4b$, and if $ve_5 \in E(G_2)$, let $C=ae_1ue_5we_3a$. In either case $C$
is an $\F$-interchanging cycle, and it is straightforward to verify that $\G \Delta C$ is connected, a contradiction.

Having reached a contradiction in all cases, we conclude that $H$ is eulerian.
\end{proof}

\small

\section*{Acknowledgements}

The first author gratefully acknowledges support by the Natural Sciences and Engineering Research Council of Canada (NSERC), Discovery Grant RGPIN-2016-04798.


\begin{thebibliography}{99}
\bibitem{BS1} M. A. Bahmanian, M. \v{S}ajna, Connection and separation in hypergraphs, {\em Theory Appl. Graphs} {\bf 2} (2015), Art. 5, 24 pp.

\bibitem{BS2} M. A. Bahmanian, M. \v{S}ajna, Quasi-Eulerian hypergraphs, {\em Elec. J. Combin.} {\bf 24} (2017), \#P3.30, 12 pp.

\bibitem{BM} J. A. Bondy, U. S. R. Murty, {\em Graph Theory}, Springer, 2008.

\bibitem{CD} C. J. Colbourn, J. H. Dinitz (eds.), {\em Handbook of Combinatorial Designs}, Chapman \& Hall/CRC, 2007.

\bibitem{DS} M. Dewar, B. Stevens, {\em Ordering Block Designs: Gray Codes, Universal Cycles and Configuration Orderings}, CMS Books in Mathematics, Springer,  2012.

\bibitem{E} L. Euler, Solutio problematis ad geometriam situs pertinentis, {\em Comment. Academiae Sci. Petropolitanae} {\bf 8} (1741), 128--140.

\bibitem{HW} C. Hierholzer, C. Wiener, \"{U}ber die M\"{o}glichkeit, einen Linienzug ohne Wiederholung und ohne Unterbrechung zu umfahren, {\em  Math. Ann.} {\bf 6} (1873), 30--32.

\bibitem{HH1} V. Horan, G. Hurlbert, Overlap cycles for Steiner quadruple systems, {\em J. Combin. Des.} {\bf 22} (2014),  53--70.

\bibitem{HH2} V. Horan, G. Hurlbert, 1-Overlap cycles for Steiner triple systems, {\em Des. Codes Cryptogr.} {\bf 72} (2014),  637--651.

\bibitem{LN} Z. Lonc, P. Naroski, On tours that contain all edges of a hypergraph, {\em Elec. J. Combin.} {\bf 17} (2010), \#R144, 31 pp.

\bibitem{SW} M. \v{S}ajna, A. Wagner, Triple systems are eulerian, {\em J. Combin. Des.} {\bf 25} (2017),  185--191.


\end{thebibliography}
\end{document}